\date{}
\def\theenumi{\arabic{enumi}}
\def\theenumii{\alph{enumii}}
\def\p@enumii{\theenumi.}
\def\theenumiii{\arabic{enumiii}}
\def\p@enumiii{(\theenumi)(\theenumii)}
\def\p@enumiv{\p@enumiii.\theenumiii}
\newtheorem{theorem}{Theorem}[section]
\newtheorem{corollary}[theorem]{Corollary}
\newtheorem{proposition}[theorem]{Proposition}
\newtheorem{lemma}[theorem]{Lemma}
\theoremstyle{definition}
\newtheorem{definition}[theorem]{Definition}
\newtheorem{remark}[theorem]{Remark}
\begin{document}
\title{Sub-elliptic diffusions on compact groups via Dirichlet form perturbation}
\author{Qi Hou\thanks{%
Partially supported by NSFC grant 12271284 and EIMI fund by Saint Petersburg State University} \\ {\small Beijing Institute of Mathematical Sciences and Applications}\and 
Laurent Saloff-Coste\thanks{Partially supported by NSF grant 
 DMS 2054593 and DMS 2343868} \\
{\small Department of Mathematics}\\
{\small Cornell University}  }
\maketitle
\begin{abstract}
This work provides an extension of parts of the classical finite dimensional sub-elliptic theory in the context of infinite dimensional compact connected metrizable groups. Given a well understood and well behaved bi-invariant Laplacian, $\Delta$, and a sub-Laplacian, $L$, to which intrinsic distances, $d_\Delta$, $d_L$, are naturally attached, we show that a comparison inequality of the form $d_L\le C(d_\Delta)^c$ (for some $0<c\le  1$) implies that the Dirichlet form of a fractional power of $\Delta$ is dominated by the Dirichlet form associated with $L$. We use this result to show that, under additional assumptions, certain good properties of the heat kernel for $\Delta$ are then passed to the heat kernel  associated with $L$. Explicit examples on the infinite product of copies of $SU(2)$ are discussed to illustrate these results.
\end{abstract}

\section{Introduction}
Symmetric Gaussian convolution semigroups of probability measures (with full support) on compact groups are families of convolution transition kernels $(\mu_t)_{t>0}$\ for reversible left-invariant (non-degenerate) diffusions on these groups. They yield heat semigroups via $$H_tf(x)=f*\mu_t(x).$$ Their infinitesimal generators $-L$\ are left-invariant sub-Laplacians (up to minus sign), formally $H_t=\exp{(-tL)}$, see (\ref{laplacian}). Unlike in the finite dimensional or typical infinite dimensional Hilbert space scenarios, Gaussian semigroups on compact groups may have a variety of different analytic properties \cite{Bendikovbook,centralJFA,survey}. In particular, they may or may not admit a density function with respect to the Haar measure; when the density function exists it is often called the heat kernel. Among all such semigroups, those that are central and  those that are of product type have been investigated most closely and are relatively understood, see \cite{Bendikovbook,Elliptic,centralsurvey} and the references therein. Central Gaussian semigroups correspond to bi-invariant diffusions; they share many properties with product type Gaussian semigroups due to the structure of compact groups \cite{compactgroup,centralJFA}. In contrast, little is known so far about general left-invariant Gaussian semigroups on compact groups. This paper is a continuation of  \cite{infdim} and  attempts to study (some) noncentral Gaussian semigroups via the approach of Dirichlet form perturbation and comparison. In \cite{infdim}, left-invariant Laplacians with Dirichlet forms comparable to the Dirichlet form of some bi-invariant Laplacian are considered, they are associated with {\em elliptic} diffusions. In the present work, comparisons of Dirichlet forms up to fractional powers are developed. This allows us to cover certain sub-elliptic diffusions whose generators are sub-Laplacians satisfying the H\"{o}rmander condition, in some sense. One common feature of the results in both works is that good properties of bi-invariant Laplacians carry over to left-invariant (sub-)Laplacians having comparable Dirichlet forms (in some sense, possibly involving fractional powers). Our choice of comparison conditions in the present paper originates in the classical sub-elliptic theory and involves fractional powers of the bi-invariant Laplacian, see e.g. \cite{Hormander,RothschildStein,FeffermanPhong,subelliptic,MustaphaVaropoulos} and the references therein.

In addition, we investigate natural conditions on (sub-)Laplacians that guarantee such Dirichlet form comparison relations. Inspired by well-known equivalence relations in the compact Lie group setting between comparison of Dirichlet forms and comparison of intrinsic distances, both up to fractional powers, we examine in the context of infinite dimensional compact groups the implication from the latter condition to the former (from distances to Dirichlet forms). Due to the complicated nature of volume in infinite dimension, we avoid using  the Besov norm $\Lambda^{2,2}_\alpha$ which is favored in the finite dimension literature. Instead of equivalence, our result is a one-sided implication that a comparison between instrinsic distances, together with some heat kernel bound assumptions on the better understood Gaussian semigroup, implies a comparison between Dirichlet forms, up to fractional powers. This part can be viewed as a partial generalization of the classical sub-elliptic theory to the infinite dimensional compact group setting.

More precisely, let $(G,\nu)$\ be a compact connected metrizable group with normalized Haar measure $\nu$ and identity element $e$. Let $\Delta$\ and $L$\ be two (sub-)Laplacians on $G$, with associated respective intrinsic distances $d_\Delta$\ and $d_L$, and Dirichlet forms $\mathcal{E}_\Delta$\ and $\mathcal{E}_L$.
It is known that these intrinsic distances are finite on a dense subset of $G$ but they can take the value $+\infty$. Consider the property (CK$\lambda$), $\lambda>0$. This is a heat kernel upper bound condition which requires that the given Gaussian semigroup $(\mu_t)_{t>0}$\ admits a continuous density, $\mu_t$, for all $t>0$, satisfying
\begin{eqnarray*}
\sup_{0<t<1} t^{\lambda}\log{\mu_t(e)}<\infty.
\end{eqnarray*}
See Definition \ref{CKlambdadef} below. In this paper, we will mostly focus on this property of the heat kernel. Note that this condition is equivalent to the existence of a constant $C\ge 0$ such that $\mu_t(e)\leq \exp{(Ct^{-\lambda})}$ for $0<t<1$. For small $t$, the smaller $\lambda$\ is, the smaller the upper bound $\exp{(Ct^{-\lambda})}$\ is, and so the stronger the condition (CK$\lambda$) is. In fact it is important to make a difference between $\lambda<1$\ and $\lambda\ge 1$, we will come back to this later. Suppose the Gaussian semigroup $(\mu_t^\Delta)_{t>0}$\ associated with $\Delta$\ satisfies (CK$\lambda$) for some $0<\lambda<1$. Under this assumption, the intrinsic distance $d_\Delta$ is finite and continuous on $G$ (it also defines the natural topology of $G$). Now, if we assume
that the distances $d_\Delta$\ and $d_L$\ are related by
\begin{eqnarray*}
d_L\leq C(d_\Delta)^c\ \mbox{for some }0<c<1,\ C>0,
\end{eqnarray*}
where $\lambda$\ and $c$\ satisfy that $(1-\lambda)c-2\lambda>0$, then we will prove that, for any $0<\epsilon<(1-\lambda)c-2\lambda$,
\begin{eqnarray}
\label{formcompare1}
\mathcal{E}_{\Delta^\epsilon}(f,f)=\|\Delta^{\epsilon/2} f\|_{L^2(G)}^2\leq C(\epsilon) (\mathcal{E}_L(f,f)+\|f\|_{L^2(G)}^2)
\end{eqnarray}
for some constant $C(\epsilon)>0$\ and any function $f$\ in the domain of $\mathcal{E}_L$, the Dirichlet form of $L$. The notation $\mathcal{E}_{\Delta^\epsilon}$\ denotes the Dirichlet form associated with $\Delta^\epsilon$. This is the content of one of our main results, Theorem \ref{formthm} in Section \ref{comparesection}.

Starting from this comparison result we obtain some estimates of the heat kernel $\mu_t^L$\ for the Gaussian semigroup associated to $L$, namely when the parameters $\lambda$\ and $c$\ satisfy some relation (roughly $0<\lambda<c/3$\ when $c$\ is small), the  Gaussian semigroup associated to $L$ satisfies (CK$\gamma$) for some $\gamma$\ depending on the fractional power $\epsilon$\ in (\ref{formcompare1}); if furthermore $\lambda$\ and $c$\ satisfy roughly the condition $0<\lambda<c/4$, then the new heat kernel $\mu_t^L$\ satisfies (CK$\gamma$) with some $0<\gamma<1$\ as well as  the Gaussian type upper bound 
\begin{eqnarray*}
\mu_t^L(x)\leq \exp{\left\{\frac{C}{t^\gamma}-\frac{C'(d_L(x,e))^2}{t}\right\}},\ \forall 0<t<1,\ \forall x\in G,
\end{eqnarray*}
for some constants $C,C'>0$.
This is the main theorem of Section \ref{Gaussiansection}, Theorem \ref{gaussianestimatethm}. Some corollaries are given in the same section.

Finally in Section \ref{examplesection} we describe concrete examples of $\Delta$ and $L$ on $G=\prod_{1}^\infty SU(2)$\ that satisfy such distance comparison and Dirichlet form comparison relations.

\section{Background}
For general perspectives on convolutional semigroups of probability measures on groups, see \cite{Heyer}. The underlying space considered in this paper is an arbitrary compact connected metrizable group, denoted by $G$, with normalized Haar measure $\nu$\ and identity element $e$. In the following we write compact group for short, and write $L^p(G)$\ for $L^p(G,\nu)$, $\|\cdot\|_p$\ for $\|\cdot\|_{L^p(G)}$, $p\in [1,\infty]$. Such a group $G$\ admits a projective structure as the projective limit of a sequence of compact Lie groups  (see \cite{groupstructure2,compactgroup,groupstructure})
 \begin{eqnarray*}
 G=\varprojlim_{\alpha\in \aleph} G_\alpha.
 \end{eqnarray*}
Here the index set $\aleph$\ is finite or countable, and ordered; $G_\alpha=G/K_\alpha$\ where $\{K_\alpha\}$\ is a decreasing sequence of compact normal subgroups with $\bigcap_{\alpha\in \aleph}K_\alpha=\{e\}$. More precisely, consider the projection maps $\pi_\alpha: G\rightarrow G_\alpha=G/K_\alpha$\ for $\alpha\in\aleph$, and $\pi_{\alpha,\beta}:G_\beta\rightarrow G_\alpha$\ for $\alpha\leq \beta$, $\alpha,\beta\in\aleph$. The group $G$\ is the projective limit of the projective system $(G_\alpha,\pi_{\alpha,\beta})_{\alpha\leq \beta}$.

In this context, we are interested in the concepts of  Gaussian semigroup, sub-Laplacian, and Dirichlet form. See \cite{locallycompact} and the references therein for more detailed discussions about these objects in the larger context of locally compact groups. A \textit{symmetric Gaussian convolution semigroup} $(\mu_t)_{t>0}$\ on $G$\ is a family of probability measures satisfying:
\begin{itemize}
    \item[(i)] (semigroup property) $\mu_t*\mu_s=\mu_{t+s}$, for any $t,s>0$;
    \item[(ii)] (weakly continuous) $\mu_t\rightarrow \delta_e$\ weakly as $t\rightarrow 0$;
    \item[(iii)] (Gaussian) $t^{-1}\mu_t(V^c)\rightarrow 0$\ as $t\rightarrow 0$\ for any neighborhood $V$\ of the identity $e\in G$;
    \item[(iv)] (symmetric) $\check{\mu}_t=\mu_t$\ for any $t>0$, here $\check{\mu}_t$\ is defined by $\check{\mu}_t(V)=\mu_t(V^{-1})$\ for any Borel subset $V\subset G$.
\end{itemize}
For convenience, recall the following convolution formulas.
\begin{itemize}
    \item The convolution of any two Borel measures $\mu_1,\mu_2$\ on $G$, $\mu_1*\mu_2$, is a measure defined by
\begin{eqnarray*}
\mu_1*\mu_2(f)=\int_{G\times G}f(xy)\,d\mu_1(x)d\mu_2(y), \ \forall f\in C(G).
\end{eqnarray*}
\item The convolution of a function $f\in C(G)$\ and a Borel measure $\mu$\ is a function defined as
\begin{eqnarray*}
f*\mu(x)=\int_G f(xy^{-1})d\mu(y),\ \mu*f(x)=\int_G f(y^{-1}x)\,d\mu(y).
\end{eqnarray*}
\item The convolution of any two functions $f,g\in C(G)$\ is the function given by
\begin{eqnarray*}
f*g(x)=\int_Gf(xy^{-1})g(y)\,d\nu(y)=\int_Gf(y)g(y^{-1}x)\,d\nu(y).
\end{eqnarray*}
\end{itemize}
For $t>0$, set
\begin{eqnarray*}
H_tf(x):=f*\check{\mu}_t(x)=\int_Gf(xy)\,d\mu_t(y)=f*\mu_t(x),\ \forall f\in C(G),
\end{eqnarray*}
then extend it to $L^2(G)$. This defines a Markov semigroup of self-adjoint operators on $L^2(G)$, which we refer to as the \textit{heat semigroup}.  Let $-L$\ be the ($L^2$-)infinitesimal generator of $(H_t)_{t>0}$
\begin{eqnarray*}
-Lf:=\lim_{t\rightarrow 0}\frac{H_tf-f}{t},
\end{eqnarray*}
with domain $\mathcal{D}(L)$\ equal to the space of functions in $L^2(G)$\ for which this limit exists in $L^2(G)$. In particular, $L$\ is self-adjoint and non-negative definite. Let $(\mathcal{E}_L,\mathcal{D}(\mathcal{E}_L))$\ be the associated Dirichlet form with domain $\mathcal{D}(\mathcal{E}_L)$ equal to the domain of $L^{1/2}$, where $L^{1/2}$\ is defined via spectral theory. Then
\begin{eqnarray*}
\mathcal{E}_L(f,g)=\langle Lf,g\rangle_{L^2}=\langle L^{1/2}f,L^{1/2}g\rangle_{L^2},\ \forall f\in \mathcal{D}(L),\ \forall g\in \mathcal{D}(\mathcal{E}_L).
\end{eqnarray*}
Here $\langle f,g\rangle_{L^2}:=\int_G fg\,d\nu$\ denotes the $L^2$\ inner product. The one-to-one correspondences among the Markov semigroup, the generator, and the Dirichlet form are standard, see e.g. \cite{Fukushima}.

All such Dirichlet forms share a common core, the space of Bruhat test functions $\mathcal{B}(G)$. It is defined as
\begin{eqnarray*}
\mathcal{B}(G):=\left\{f:G\rightarrow \mathbb{R}:f=\phi\circ\pi_\alpha\ \mbox{for some }\alpha\in \aleph,\ \phi\in C^\infty(G_\alpha)\right\}.
\end{eqnarray*}
Here $C^\infty(G_\alpha)$\ denotes the set of all smooth functions on $G_\alpha$; $\mathcal{B}(G)$\ is independent of the choice of $\{K_\alpha\}_{\alpha\in \aleph}$. The Bruhat test functions are generalizations of cylindric functions on the infinite dimensional torus $\mathbb{T}^\infty$\ (i.e. smooth functions that depend only on finitely many coordinates).

When applied to  Bruhat test functions, generators of symmetric Gaussian semigroups admit an explicit expression which we now recall. Let $\mathfrak{g}$\ be the (projective) Lie algebra of $G$\ defined as the projective limit of the Lie algebras of $G_\alpha$\ (denoted by $\mathfrak{g}_\alpha$), with projection maps $d\pi_{\alpha,\beta}$\ for $\alpha,\beta\in \aleph$, $\alpha\leq \beta$. A family $\{X_i\}_{i\in \mathcal{I}}$\ of elements of $\mathfrak{g}$\ is called a \textit{projective family} if for any $\alpha\in \aleph$, there is a finite subset $\mathcal I_\alpha$ of $\mathcal I$ such that $d\pi_\alpha(X_i)=0$ for all $i\not\in \mathcal I_\alpha$. It is called a \textit{projective basis} of $\mathfrak{g}$, if for each $\alpha\in \aleph$, there is a finite subset $\mathcal{I}_\alpha\subset \mathcal{I}$, such that $d\pi_\alpha(X_i)=0$\ for all $i\notin \mathcal{I}_\alpha$, and $\{d\pi_\alpha(X_i)\}_{i\in \mathcal{I}_\alpha}$\ is a basis of the Lie algebra $\mathfrak{g}_\alpha$. The (projective) Lie algebra $\mathfrak{g}$\ admits many projective bases.

For a fixed projective basis $X=\{X_i\}_{i\in \mathcal{I}}$, generators of symmetric Gaussian semigroups are in one-to-one correspondence with symmetric non-negative real matrices $A$, meaning $A=(a_{ij})_{\mathcal{I}\times \mathcal{I}}$\ where $a_{ij}=a_{ji}$\ are real numbers, and $\sum_{i,j\in\mathcal{I}} a_{ij}\xi_i\xi_j\geq 0$\ for all $\xi=(\xi_i)_{i\in \mathcal{I}}$\ with finitely many nonzero $\xi_i\in\mathbb{R}$. The matrix $A$\ can be degenerate. Associate to each such matrix $A$ the second-order left-invariant differential operator
\begin{eqnarray}
\label{laplacian}
L_A:=-\sum_{i,j\in \mathcal{I}}a_{ij}X_iX_j,
\end{eqnarray}
it acts on $\mathcal{B}(G)$\ by
\begin{eqnarray*}
L_Af(x) = -\sum_{i,j\in \mathcal{I}}a_{ij}(d\pi_\alpha(X_i)d\pi_\alpha(X_j)\phi)(\pi_\alpha(x)),\ \mbox{for }f=\phi\circ \pi_\alpha\in \mathcal{B}(G).
\end{eqnarray*}
The operator $L_A$\  is called a \textit{Laplacian} if $A$\ is positive definite. For any symmetric Gaussian semigroup $(\mu_t)_{t>0}$\ with corresponding heat semigroup $(H_t)_{t>0}$, there is a symmetric non-negative  real matrix $A$\ such that
\begin{eqnarray*}
\lim_{t\rightarrow 0}\frac{H_tf-f}{t}=-L_Af=\sum_{i,j\in \mathcal{I}} a_{ij}X_iX_jf, \ \forall f\in\mathcal{B}(G).
\end{eqnarray*}
Conversely, given an operator $L=L_A$ as above, there is a Markov semigroup that we denote by $(H^L_t)_{t>0}$, whose generator agrees with $-L$\ in the above sense. It corresponds to a Gaussian semigroup $(\mu_t^L)_{t>0}$\ and a Dirichlet form $(\mathcal{E}_L,\mathcal{D}(\mathcal{E}_L))$\ as before.

Any such $L$\ can be written as a sum of squares
\begin{eqnarray*}
L=-\sum_{i\in \mathcal{J}} Y_i^2
\end{eqnarray*}
for some projective basis $\{Y_i\}_{i\in \mathcal{I}}$ and some sub-index set $\mathcal{J}\subseteq \mathcal{I}$. Such choice of projective basis is not unique. The operator $L$ is a Laplacian if and only if  $\mathcal J=\mathcal I$. By definition, it is a {\em sub-Laplacian} if the family $\{Y_i: i\in \mathcal J\}$ generates algebraically (i.e., using Lie brackets) the (projective) Lie algebra of $G$ (i.e., the projection of $\{Y_i: i\in \mathcal J\}$ on each $G_\alpha$ satisfies the H\"ormander condition).

One other quantity associated with such a (strongly local left-invariant) Dirichlet form $\mathcal{E}$, or equivalently with  $L$, is the intrinsic distance, denoted by $d_L$ (it can take the value $+\infty$). It is defined as
\begin{eqnarray}
\label{distdef}
d_L(x,y)=\sup{\left\{f(x)-f(y):f\in \mathcal{B}(G),\ \Gamma_L(f,f)\leq 1\ \mbox{on }G\right\}},
\end{eqnarray}
where for $f,g\in \mathcal{B}(G)$, $\Gamma_L(f,g):=-(L(fg)-(Lf)g-f(Lg))/2$. If $L=-\sum a_{ij}X_iX_j$, then
$\Gamma_L(f,g)=\sum a_{ij}X_ifX_jg$. Note that $\Gamma_L(f,f)$\ is the square of the length of the ``gradient'' of $f$, which can be seen from the expression $\Gamma_L(f,f)=\sum |Y_if|^2$\ if we write $L=-\sum Y_i^2$\ in a sum of squares form. We may equally define the intrinsic distance notion by taking supremum over functions $f$\ that locally belong to $\mathcal{D}(\mathcal{E})$. See e.g. \cite[Section 4]{locallycompact}. When the point $y$ is the identity $e$, we write $d(x):=d(x,e)$.

When $G$\ is infinite dimensional, it is known that Gaussian semigroups with various analytic properties exist on $G$, see e.g. \cite{Elliptic,centralsurvey}. One particular type of property we consider in the present paper is the (CK$\lambda$) property, where $\lambda$\ is a positive real number.
\begin{definition}
\label{CKlambdadef}
A Gaussian semigroup $(\mu_t)_{t>0}$\ is said to satisfy (CK$\lambda$), if
\begin{itemize}
\item[(i)] for all $t>0$, $\mu_t$\ is absolutely continuous with respect to the Haar measure of $G$, and admits a continuous density;
\item[(ii)] furthermore, the continuous density function, denoted again by $\mu_t(\cdot)$, satisfies
\begin{eqnarray*}
\sup_{0<t<1}t^\lambda \log{\mu_t(e)}<\infty.
\end{eqnarray*}
\end{itemize}
\end{definition}
Note that (CK$\lambda$) is an ultracontractivity condition that the corresponding heat semigroup $(H_t)_{t>0}$\ satisfies (recall that $H_t f=f*\mu_t$)
\begin{eqnarray*}
\|H_t\|_{L^1(G)\rightarrow L^\infty(G)}\leq e^{C/t^\lambda},\ 0<t<1,
\end{eqnarray*}
where $C>0$\ is a constant. In the following we write $\|\cdot\|_{1\rightarrow\infty}$\ for $\|\cdot\|_{L^1(G)\rightarrow L^\infty(G)}$.

If $(\mu_t)_{t>0}$\ satisfies (CK$\lambda$), in particular it admits a density function $\mu_t(\cdot)$\ for all $t>0$, and $H_tf(x)=f*\mu_t(x)=\int f(xy)\mu_t(y)d\nu(y)$. With a little abuse of notation we call $\mu_t(\cdot)$\ the heat kernel of the heat semigroup $H_t$, because the heat kernel $h(t,x,y)$\ that satisfies $H_t f(x)=\int f(y)h(t,x,y)d\nu(y)$\ is just $h(t,x,y)=\mu_t(x^{-1}y)$.

The special value $\lambda=1$\ turns out to be very important. For instance, if $0<\lambda<1$, 
then the associated intrinsic distance $d$ is finite and continuous and that the density function $\mu_t$\ satisfies a full Gaussian type estimate
\begin{eqnarray*}
\mu_t(x)\leq \exp{\left\{\frac{C}{t^\gamma}-\frac{C'd^2(x)}{t}\right\}},\ \forall 0<t<1,\ \forall x\in G,
\end{eqnarray*}
where $C,C'>0$, and recall that $d(x):=d(x,e)$.  When $\lambda\geq 1$, the condition (CK$\lambda$) is much weaker (it does not imply that $d$ is finite), and there is no such Gaussian type bound of interest. See \cite{orange} for details.

\section{Comparisons between distances and between Dirichlet forms}
\label{comparesection}
Let $G$\ be a compact group and $\Delta$\ be a sub-Laplacian on $G$, in particular, $-\Delta$\ is the generator of some heat semigroup $(H_t^\Delta)_{t>0}$. We write $H_t$\ for $H_t^\Delta$\ in this section when there is no ambiguity. By spectral theory, for any $0<\epsilon\leq 1$,
\begin{eqnarray}
\label{tintegral}
\mathcal{E}_{\Delta^{\epsilon}}(f,f)=\|\Delta^{\epsilon/2} f\|^2_2=C\int_0^\infty \left(\frac{t\|\partial_tH_tf\|_2}{t^{\epsilon/2}}\right)^2\frac{dt}{t}
\end{eqnarray}
for some constant $C=C(\epsilon)>0$. In the classical setting of $\mathbb{R}^n$, this norm defines the homogeneous $L^2$\ Sobolev norm of order $\epsilon$, which is also equivalent 
to the $\Lambda^{2,2}_\epsilon$\ Besov norm. Divide the integral into two parts, one from $0$\ to $1$\ and the other from $1$\ to $\infty$. Denote $\partial_t:=\partial/\partial t$. Because $\|\partial_tH_tf\|_2\leq (et)^{-1}\|f\|_2$, the second part satisfies for some $C'>0$,
$$\int_{1}^{\infty}\left(\frac{t\|\partial_tH_tf\|_2}{t^{\epsilon/2}}\right)^2\frac{dt}{t}\leq e^{-2}\int_{1}^{\infty}\frac{\|f\|_2^2}{t^{1+\epsilon}}\,dt\leq C'\|f\|_2^2.$$
We are going to show that, under additional conditions, the part of the integral from $0$\ to $1$\ is controlled by the Dirichlet form $\mathcal{E}_L$\ of $L$. We state this estimate in the next theorem.

Another norm that is relevant to us and will appear in the proof of that theorem is the $\Lambda^{2,\infty}_{\alpha}$\ Besov-Lipschitz norm defined in terms of the intrinsic distance. Given any (sub-)Laplacian $P$\ and its corresponding intrinsic distance $d_P$,  for any $0<\alpha<1$, define the inhomogeneous $\Lambda^{2,\infty}_{\alpha,P}$\ Besov-Lipschitz norm as
\begin{eqnarray*}
\Lambda^{2,\infty}_{\alpha,P}(f):=\|f\|_2+\sup_{\substack{y\in G\\y\neq e}}\frac{\|f(\cdot y)-f(\cdot)\|_2}{d_P(y)^\alpha},\ \forall f\in L^2(G).
\end{eqnarray*}
In general, for the definition and discussion of the $\Lambda^{p,q}_\alpha$\ norms in $\mathbb{R}^n$\ for $1\leq p,q\leq \infty$, $\alpha>0$, see for example \cite[Chapter V Section 5]{Steinbook}. For $q\neq \infty$, the generalization of this norm to our setting is problematic.

We now state the theorem, which says that under some additional assumptions, the Sobolev norm of fractional order (associated with $\Delta$) is controlled by the Dirichlet form of $L$.
\begin{theorem}
\label{formthm}
Let $G$\ be a compact connected metrizable group. Let $\Delta$\ be a sub-Laplacian on $G$\ with associated Gaussian semigroup $(\mu_t^\Delta)_{t>0}$\ that satisfies (CK$\lambda$), $0<\lambda<1$. Let $L$\ be another sub-Laplacian on $G$. Suppose the associated intrinsic distances $d_\Delta$\ and $d_L$\ satisfy the comparison relation
\begin{eqnarray}
\label{distcompare}
d_L\leq C(d_\Delta)^c
\end{eqnarray}
where $0<c\leq 1$, $C>0$. Assume further that
\begin{eqnarray*}
(1-\lambda)c-2\lambda>0.
\end{eqnarray*}
Then the following two norm comparisons hold.\\
(1) For any $2\lambda[(1-\lambda)c]^{-1}<\alpha< 1$ and any 
\begin{eqnarray*}
0<\epsilon<(1-\lambda)\alpha c-2\lambda,
\end{eqnarray*}
\begin{eqnarray} \label{comp1}
\|\Delta^{\epsilon/2} f\|_2\leq C\Lambda^{2,\infty}_{\alpha,\,L}(f).
\end{eqnarray}
(2) For any
\begin{eqnarray*}
0<\epsilon<(1-\lambda)c-2\lambda,
\end{eqnarray*}
\begin{eqnarray}
\label{formcomparison}
\|\Delta^{\epsilon/2} f\|_2^2\leq C(\mathcal{E}_L(f,f)+\|f\|_2^2).
\end{eqnarray}
\end{theorem}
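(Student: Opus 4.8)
The plan is to reduce the fractional Sobolev norm, via the spectral representation \eqref{tintegral}, to a single scalar quantity: a moment of the intrinsic distance $d_\Delta$ against the heat kernel $\mu_t^\Delta$. The contribution of $\int_1^\infty$ in \eqref{tintegral} is already absorbed into $C\|f\|_2^2$, and $\|f\|_2\le \Lambda^{2,\infty}_{\alpha,L}(f)$ by definition, so it suffices to control the small-time part $\int_0^1 t^{1-\epsilon}\|\partial_t H_t f\|_2^2\,dt$. I would first trade the time derivative for a finite difference of the semigroup: writing $\partial_t H_t=-\Delta H_t$ and comparing the spectral multipliers through the elementary inequality $ue^{-u}\le 1-e^{-u}$ for $u\ge 0$ (here $u=t\xi$), one gets $\|t\,\partial_t H_t f\|_2\le \|H_t f-f\|_2$. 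Hence
\begin{equation*}
\int_0^1 t^{1-\epsilon}\|\partial_t H_t f\|_2^2\,dt=\int_0^1 t^{-1-\epsilon}\|t\,\partial_t H_t f\|_2^2\,dt\le \int_0^1 t^{-1-\epsilon}\|H_t f-f\|_2^2\,dt,
\end{equation*}
and the task becomes bounding this last integral by $\Lambda^{2,\infty}_{\alpha,L}(f)^2$ for (1), and by $\mathcal{E}_L(f,f)+\|f\|_2^2$ for (2).

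Next I would bring in the Besov--Lipschitz norm and the distance comparison. Since $H_t f(x)-f(x)=\int_G\big(f(xy)-f(x)\big)\,d\mu_t^\Delta(y)$ and $\mu_t^\Delta$ is a probability measure, Minkowski's integral inequality gives
\begin{equation*}
\|H_t f-f\|_2\le \int_G\|f(\cdot y)-f(\cdot)\|_2\,d\mu_t^\Delta(y)\le \Lambda^{2,\infty}_{\alpha,L}(f)\int_G d_L(y)^\alpha\,d\mu_t^\Delta(y).
\end{equation*}
Applying \eqref{distcompare} in the form $d_L(y)^\alpha\le C\,d_\Delta(y)^{\alpha c}$ reduces the whole problem to the single heat-kernel moment $M(t):=\int_G d_\Delta(y)^{\alpha c}\,\mu_t^\Delta(y)\,d\nu(y)$.

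The crux of the argument is the estimate of $M(t)$, and the main obstacle is that the pointwise Gaussian upper bound $\mu_t^\Delta(y)\le \exp\{C/t^\lambda-C'd_\Delta(y)^2/t\}$ available under (CK$\lambda$) with $0<\lambda<1$ (recalled in Section 2) is far too lossy to integrate directly, its integral over $G$ being of order $e^{C/t^\lambda}\gg 1$. The remedy is to split $G$ at a radius $R=R(t)$. On the ball $\{d_\Delta\le R\}$ I would discard the Gaussian factor and use only $d_\Delta^{\alpha c}\le R^{\alpha c}$ together with the mass normalization $\int_G\mu_t^\Delta\,d\nu=1$, giving a contribution $\le R^{\alpha c}$. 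On the complement the pointwise bound becomes genuinely useful: since $d_\Delta$ is bounded on the compact $G$ and $r\mapsto r^{\alpha c}e^{C/t^\lambda-C'r^2/t}$ is decreasing past $R$ once $R$ is large enough, the tail contributes at most $R^{\alpha c}e^{-C/t^\lambda}$. Balancing the two regimes forces $C'R^2/t\sim C/t^\lambda$, i.e. $R\sim t^{(1-\lambda)/2}$, and yields $M(t)\le C\,t^{\alpha c(1-\lambda)/2}$. Inserting this bound, $\int_0^1 t^{-1-\epsilon}\|H_t f-f\|_2^2\,dt\le C\Lambda^{2,\infty}_{\alpha,L}(f)^2\int_0^1 t^{\alpha c(1-\lambda)-1-\epsilon}\,dt$, which converges precisely when $\epsilon<(1-\lambda)\alpha c$, hence throughout the stated range $0<\epsilon<(1-\lambda)\alpha c-2\lambda$. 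This proves \eqref{comp1}.

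Finally, (2) would follow from (1) by letting $\alpha\uparrow 1$, once the Besov--Lipschitz norm is controlled by the Dirichlet form. The required input is the standard intrinsic-distance gradient estimate $\|f(\cdot y)-f(\cdot)\|_2\le d_L(y)\,\mathcal{E}_L(f,f)^{1/2}$ for the strongly local left-invariant form $\mathcal{E}_L$, where $d_L(y)=d_L(e,y)$ by left-invariance. Since $G$ is compact, $d_L$ is bounded, say by $D_L$, so with $C:=\max(1,D_L)$ one has, for every $0<\alpha<1$,
\begin{equation*}
\frac{\|f(\cdot y)-f(\cdot)\|_2}{d_L(y)^\alpha}=\frac{\|f(\cdot y)-f(\cdot)\|_2}{d_L(y)}\,d_L(y)^{1-\alpha}\le C\,\mathcal{E}_L(f,f)^{1/2},
\end{equation*}
whence $\Lambda^{2,\infty}_{\alpha,L}(f)\le C\big(\mathcal{E}_L(f,f)+\|f\|_2^2\big)^{1/2}$ uniformly in $\alpha\in(0,1)$. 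Combining with (1) gives \eqref{formcomparison} for every $\epsilon<(1-\lambda)\alpha c-2\lambda$; letting $\alpha\uparrow 1$, so that the admissible threshold increases to $(1-\lambda)c-2\lambda$, yields the full range claimed in (2). The delicate point throughout remains the moment estimate of the third paragraph: the competition between the on-diagonal blow-up $e^{C/t^\lambda}$ and the Gaussian decay is exactly what produces the factor $(1-\lambda)$ and thereby fixes the admissible $\epsilon$.
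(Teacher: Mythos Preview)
Your proof is correct and takes a genuinely different, more elementary route than the paper's. The paper works directly with $\|\partial_t H_t f\|_2$ and, via $\int_G\partial_t\mu_t^\Delta\,d\nu=0$, reduces to the $d_\Delta$-moment $\int_G d_\Delta(y)^\beta|\partial_t\mu_t^\Delta(y)|\,d\nu(y)$ against the \emph{time derivative} of the heat kernel; on the inner ball $\{d_\Delta\le R\}$ this forces the use of the nontrivial $L^1$ bound $t\int_G|\partial_t\mu_t^\Delta|\,d\nu\le 2e\max\{\log\mu^\Delta_{t/2}(e),2\}$ from \cite{Analytic}, which under (CK$\lambda$) contributes an extra factor $t^{-\lambda-1}$ and ultimately yields the range $\epsilon<(1-\lambda)\alpha c-2\lambda$. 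Your device of trading $t\,\partial_tH_t$ for $I-H_t$ via the spectral inequality $ue^{-u}\le 1-e^{-u}$ lets you take the moment against the probability measure $\mu_t^\Delta$ itself, where the ball contribution is simply $\le R^{\alpha c}$ by mass normalization; this bypasses the external input from \cite{Analytic} entirely and, as you observe, actually produces the strictly larger admissible range $\epsilon<(1-\lambda)\alpha c$. One small caveat: you write ``Since $G$ is compact, $d_L$ is bounded'', but compactness alone does not guarantee this for a sub-Laplacian (the intrinsic distance can be $+\infty$); in the present setting finiteness of $d_L$ follows from the hypotheses, namely $d_L\le C(d_\Delta)^c$ together with the continuity (hence boundedness) of $d_\Delta$ that (CK$\lambda$) with $0<\lambda<1$ ensures.
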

Before giving the proof of (\ref{comp1}), we observe that (\ref{formcomparison})
follows from (\ref{comp1}) thanks to the following lemma.
\begin{lemma} \label{lemP}
Let $G$\ be a compact connected metrizable group. Let $L$\ be a sub-Laplacian on $G$ such that 
$\sup_{x,y\in G}\{d_L(x,y)\}=\mbox{\em diam}_L(G)<+\infty$. Then for any $\beta\in [0,1]$ and $y\in G$,
\begin{equation} \label{Besovcomp}
\sup_{\substack{y\in G\\y\neq e}}\frac{\|f(\cdot y)-f(\cdot)\|_2}{d_L(y)^{\beta}}\leq \max\{1,\mbox{\em diam}_L(G)^{1-\beta}\}\left(\mathcal{E}_L(f,f)\right)^{1/2}.
\end{equation}
\end{lemma}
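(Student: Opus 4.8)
The plan is to reduce the whole statement to the single endpoint estimate
\[
\|f(\cdot y)-f(\cdot)\|_2\le d_L(y)\,\mathcal{E}_L(f,f)^{1/2},
\]
valid for every $f$ in the common core $\mathcal{B}(G)$ and every $y\in G$, and then to recover the general exponent $\beta\in[0,1]$ by an elementary manipulation of powers. Since $\mathcal{B}(G)$ is a form core, since right translation $f\mapsto f(\cdot y)$ is an $L^2$ isometry, and since both sides above are continuous in the form norm $(\mathcal{E}_L(\cdot,\cdot)+\|\cdot\|_2^2)^{1/2}$, it suffices to prove the endpoint estimate on $\mathcal{B}(G)$; it then extends by density to all of $\mathcal{D}(\mathcal{E}_L)$.

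To prove the endpoint estimate I would fix $f=\phi\circ\pi_\alpha\in\mathcal{B}(G)$ and write $L=-\sum_i Y_i^2$ in sum-of-squares form. Let $\gamma\colon[0,T]\to G$ be a horizontal path joining $e$ to $y$, with $\dot\gamma(s)=\sum_i a_i(s)\,Y_i(\gamma(s))$ and length $\int_0^T(\sum_i a_i(s)^2)^{1/2}\,ds$ as close as desired to $d_L(y)$. Because the $Y_i$ are left-invariant, for each fixed $x$ the curve $s\mapsto x\gamma(s)$ is again horizontal with the same coefficients, so $f(xy)-f(x)=\int_0^T\sum_i a_i(s)\,(Y_if)(x\gamma(s))\,ds$, and Cauchy--Schwarz in the index $i$ bounds the integrand by $(\sum_i a_i(s)^2)^{1/2}\,\Gamma_L(f,f)(x\gamma(s))^{1/2}$. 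Taking the $L^2(d\nu)$ norm in $x$ and applying Minkowski's integral inequality to move the norm inside the $s$-integral, the crucial simplification is that the Haar measure is right-invariant, so that $\int_G\Gamma_L(f,f)(x\gamma(s))\,d\nu(x)=\int_G\Gamma_L(f,f)\,d\nu=\mathcal{E}_L(f,f)$ for every $s$; each inner factor then equals $\mathcal{E}_L(f,f)^{1/2}$, independently of $s$. This gives $\|f(\cdot y)-f(\cdot)\|_2\le\mathcal{E}_L(f,f)^{1/2}\cdot\mathrm{length}(\gamma)$, and letting $\mathrm{length}(\gamma)\to d_L(y)$ yields the endpoint estimate.

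For the general exponent I would simply write, for $y\neq e$,
\[
\frac{\|f(\cdot y)-f(\cdot)\|_2}{d_L(y)^{\beta}}\le d_L(y)^{1-\beta}\,\mathcal{E}_L(f,f)^{1/2},
\]
and observe that $d_L(y)^{1-\beta}\le\max\{1,\mathrm{diam}_L(G)^{1-\beta}\}$: if $\mathrm{diam}_L(G)\ge1$ then $d_L(y)^{1-\beta}\le\mathrm{diam}_L(G)^{1-\beta}$ since $1-\beta\ge0$, while if $\mathrm{diam}_L(G)<1$ then $d_L(y)\le\mathrm{diam}_L(G)<1$ forces $d_L(y)^{1-\beta}\le1$. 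Taking the supremum over $y\neq e$ gives (\ref{Besovcomp}). Note that the hypothesis $\mathrm{diam}_L(G)<\infty$ is exactly what keeps $d_L(y)$ finite for all $y$ and supplies the connecting horizontal paths used above.

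I expect the one genuinely delicate point to be the horizontal-path representation of the intrinsic distance $d_L$ in the infinite-dimensional group $G$, where minimizing curves are not automatically available. The safest route is to exploit that $f$ factors through a finite-dimensional quotient $G_\alpha$, on which Chow--Rashevskii connectivity and the existence of almost length-minimizing horizontal paths are classical: running the computation of the second paragraph on $G_\alpha$, and then combining it with the $1$-Lipschitz projection bound $d_{L_\alpha}(\pi_\alpha y)\le d_L(y)$ and the compatibility $\mathcal{E}_{L_\alpha}(\phi,\phi)=\mathcal{E}_L(f,f)$, lifts the endpoint estimate back to $G$ and circumvents the infinite-dimensional geodesic issue entirely.
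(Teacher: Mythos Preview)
Your proposal is correct and essentially the same as the paper's proof: the paper likewise establishes the endpoint inequality $\|f(\cdot y)-f(\cdot)\|_2\le d_L(y)\,\mathcal{E}_L(f,f)^{1/2}$ for $f\in\mathcal{B}(G)$ by integrating along a horizontal curve in the finite-dimensional quotient $G_l$ through which $f$ factors, using right-invariance of Haar measure, and then invoking $d_{L,l}(\pi_l(y))\le d_L(y)$ to lift back to $G$---exactly the route you outline in your final paragraph. The only cosmetic differences are that the paper uses an actual minimizing unit-speed curve on the compact quotient (rather than an approximating one) and applies Jensen/Fubini after squaring (rather than Minkowski's integral inequality); the passage from $\beta=1$ to general $\beta\in[0,1]$ is identical.
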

\begin{proof}
 This is a simple consequence of a basic Poincar\'e-type inequality. More precisely, suppose $L=-\sum_{i\in\mathcal{I}} Y_i^2$\ for a projective family $\{Y_i\}_{i\in\mathcal{I}}$. Because $L$ is a sub-Laplacian, for any $\alpha\in\aleph$, the set $\{d\pi_\alpha(Y_i)\}_{i\in\mathcal{I}}$\ satisfies the H\"{o}rmander condition on $G_\alpha=G/K_\alpha$. For any Bruhat test function $f\in \mathcal{B}(G)$, suppose $f=\phi\circ\pi_l$\ where $l\in \aleph$, $\phi$\ is smooth on $G_l=G/K_l$. On the Lie group $G_l$, let $d_{L,l}$\ denote the intrinsic distance associated with $d\pi_l (L)=-\sum (d\pi_l(Y_i))^2$, and denote the identity element as $e_l$. In our notation, $d_{L,l}(\pi_l(y))=d_{L,l}(\pi_l(y),e_l)$\ measures the (sub-elliptic) distance between $e_l$\ and $\pi_l(y)$, and there is some curve $\gamma_l:[0,d_{L,l}(\pi_l(y))]\rightarrow G_l$\ with unit (sub-elliptic) speed, such that $\gamma_l(0)=e_l$, $\gamma_l(d_{L,l}(\pi_l(y)))=\pi_l(y)$. See \cite[Sections 5.5.3, 5.5.4]{SturmGeom}. Then
\begin{eqnarray}
\lefteqn{|f(xy)-f(x)|=|\phi(\pi_l(x)\pi_l(y))-\phi(\pi_l(x))|}\notag\\
&\leq& \int_{0}^{d_{L,l}(\pi_l(y))}\left(\sum_{i\in\mathcal{I}}\left|d\pi_l(Y_i)\phi(\pi_l(x)\gamma_l(s))\right|^2\right)^{1/2}\,ds. \label{poincareprojection}
\end{eqnarray} 
Note that $Y_if=(d\pi_l(Y_i)\phi)\circ \pi_l$; the summation in (\ref{poincareprojection}) is a finite summation because $\{Y_i\}_{i\in\mathcal{I}}$\ is a projective family. Integrating the square of both sides in (\ref{poincareprojection}) over $x\in G$\ gives
\begin{eqnarray*}
\lefteqn{\int_G |f(xy)-f(x)|^2\,d\nu(x)}\\
&\leq& d_{L,l}(\pi_l(y))\int_{0}^{d_{L,l}(\pi_l(y))}\int_G\sum_{i\in\mathcal{I}}\left|d\pi_l(Y_i)\phi(\pi_l(x)\gamma_l(s))\right|^2\,d\nu(x)ds\\
&=& d_{L,l}(\pi_l(y))\int_{0}^{d_{L,l}(\pi_l(y))}\int_G\sum_{i\in\mathcal{I}}\left|Y_if(x)\right|^2\,d\nu(x)ds.
\end{eqnarray*}
Jensen's inequality and the Fubini theorem are used to obtain the first inequality, and right-invariance of the Haar measure is applied in the last line. Because (see \cite[Theorem 4.2]{locallycompact})
\begin{eqnarray*}
d_L(y)=d_L(e,y)=\sup_{\alpha\in\aleph}d_{L,\alpha}(e_\alpha,\pi_\alpha(y)),
\end{eqnarray*}
it follows that
\begin{eqnarray*}
\int_G |f(xy)-f(x)|^2\,d\nu(x) &\leq &d_L(y)^2\int_G\sum_{i\in\mathcal{I}}\left|Y_if(x)\right|^2\,d\nu(x)\\
&=& d_L(y)^2\mathcal E_L(f,f).\end{eqnarray*}
Dividing (the square roots of) both sides by $d_L(y)^{\beta}$ and using $\beta\in [0,1]$ and the assumption that $\mbox{diam}_L(G)<+\infty$, we obtain (\ref{Besovcomp}).
\end{proof}

\begin{proof}[Proof of (\ref{comp1})]
By Lemma \ref{lemP} and the discussion above, it remains to estimate the integral part from $0$\ to $1$\ in (\ref{tintegral}). In this proof we write $H_t$\ for $H_t^\Delta$\ and $\mu_t$\ for $\mu_t^\Delta$. Because $\int_G \partial_t\mu_t(y)\,d\nu(y)=0$, 
\begin{eqnarray*}
\|\partial_tH_tf\|_2\leq \int_G \|f(\cdot y)-f(\cdot)\|_2\,|\partial_t\mu_t(y)|\,d\nu(y).
\end{eqnarray*}
Then for any $\beta\geq 0$,
\begin{eqnarray*}
\|\partial_tH_tf\|_2
\leq \sup_{\substack{y\in G\\y\neq e}}\frac{\|f(\cdot y)-f(\cdot)\|_2}{d_\Delta(y)^\beta}\,\int_G d_\Delta(y)^\beta|\partial_t\mu_t(y)|\,d\nu(y).
\end{eqnarray*}
Note that the property (CK$\lambda$) with $0<\lambda<1$\ guarantees that $d_\Delta$\ is a continuous function \cite[Theorem 3.2]{orange} and moreover defines the topology of $G$\ \cite[Theorem 4.2]{locallycompact}. By the comparison relation (\ref{distcompare}) between $d_\Delta$\ and $d_L$, $d_L$\ is a continuous function too, and for some $C>0$,
\begin{eqnarray*}
\sup_{\substack{y\in G\\y\neq e}}\frac{\|f(\cdot y)-f(\cdot)\|_2}{d_\Delta(y)^\beta}\leq C\sup_{\substack{y\in G\\y\neq e}}\frac{\|f(\cdot y)-f(\cdot)\|_2}{d_L(y)^{\beta/c}}.
\end{eqnarray*}
For constants that we do not track the exact values or expressions of, we often denote them all by $C,C'$. Thus (assuming $0\leq \beta<c$),
\begin{eqnarray*}
\|\partial_tH_tf\|_2 &\leq& C\sup_{\substack{y\in G\\y\neq e}}\frac{\|f(\cdot y)-f(\cdot)\|_2}{d_L(y)^{\beta/c}}\,\int_G d_\Delta(y)^\beta|\partial_t\mu_t(y)|\,d\nu(y)\\
&\le & C\Lambda^{2,\infty}_{\beta/c,\,L}(f) \,\int_G d_\Delta(y)^\beta|\partial_t\mu_t(y)|\,d\nu(y).\end{eqnarray*}

To estimate the integral factor, we need a key result, \cite[Theorem 4.1]{Analytic} (see also \cite{der}), which asserts that 
\[
t\int_G\left|\partial_t\mu_t(x)\right| d\nu(x)\le 2e \max\{M_0(t/2), 2\}, \quad \forall t>0, 
\]
where $M_0(t)=\log \mu_t(e)$. Here, by assumption, $M_0(t)\le Ct^{-\lambda}$ because $\mu_t=\mu^\Delta_t$\ satisfies (CK$\lambda$). This also implies that for some $C_1,C_2>0$,
\begin{eqnarray*}
|\partial_t\mu_t(x)|\leq \exp{\left\{\left(\frac{C_1}{t^\lambda}-\frac{C_2d_\Delta(x)^2}{t}\right)\right\}},\ \forall 0<t<1,
\end{eqnarray*}
see e.g. \cite[Theorem 4.2]{centralderivative}. Pick some $R$\ such that $C_1/t^\lambda\leq C_2R^2/2t$, for example $R=(2C_1/C_2)^{1/2}t^{(1-\lambda)/2}=Ct^{(1-\lambda)/2}$. For $t$\ small, $R$\ does not exceed the diameter of $G$. Write
\begin{eqnarray*}
\lefteqn{\int_G d_\Delta(y)^\beta|\partial_t\mu_t(y)|\,d\nu(y)}\\
&=&\int_{d_\Delta(y)\leq R}d_\Delta(y)^\beta|\partial_t\mu_t(y)|\,d\nu(y)+\int_{d_\Delta(y)>R}d_\Delta(y)^\beta|\partial_t\mu_t(y)|\,d\nu(y)\\
&\leq& R^\beta \frac{2e\max{\{M_0(t/2),2\}}}{t}+\int_{d_\Delta(y)>R}d_\Delta(y)^\beta\exp{\left\{\left(-\frac{C_2d_\Delta(y)^2}{2t}\right)\right\}}\,d\nu(y).
\end{eqnarray*}
For the second integral, the maximum of the integrand over $d_\Delta\in\mathbb{R}_+$\ is obtained at $d_\Delta(y)=\left(\beta t/C_2\right)^{1/2}$, and equals $\left(\beta t/C_2\right)^{\beta/2}e^{-\beta/2}$. Plugging in the upper bounds, we get that
\begin{eqnarray*}
\lefteqn{\int_G d_\Delta(y)^\beta|\partial_t\mu_t(y)|\,d\nu(y)}\\
&\leq& \left(Ct^{\frac{1-\lambda}{2}}\right)^\beta C't^{-\lambda-1}+\left(\frac{\beta t}{C_2}\right)^{\frac{\beta}{2}}e^{-\frac{\beta}{2}}\nu(G)\\
&\leq& C''t^{(1-\lambda)\beta/2-\lambda-1},\ \mbox{for }0<t<1.
\end{eqnarray*}

Hence
\begin{eqnarray*}
\lefteqn{\|\Delta^{\epsilon/2} f\|_2^2\leq C\|f\|_2^2+}\\
&&C'\int_0^1 t^{1-\epsilon} \left(\sup_{\substack{y\in G\\y\neq e}}\frac{\|f(\cdot y)-f(\cdot)\|_2}{(d_L(y))^{\beta/c}}\right)^2
\left(\int_G(d_\Delta(y))^\beta|\partial_t\mu_t(y)|\,d\nu(y)\right)^2\,dt\\
&\leq& C\|f\|_2^2+C\mathcal{E}_L(f,f)\int_0^1 t^{1-\epsilon+(1-\lambda)\beta-2\lambda-2}\,dt. 
\end{eqnarray*}
For the integral in $t$\ to converge, we need
\begin{eqnarray*}
1-\epsilon+(1-\lambda)\beta-2\lambda-2>-1,
\end{eqnarray*}
giving
\begin{eqnarray*}
0<\epsilon<(1-\lambda)\beta-2\lambda.
\end{eqnarray*}
This proves (\ref{comp1}) with $\alpha:=\beta/c$. As mentioned earlier, the comparison relation (\ref{formcomparison}) then follows from (\ref{comp1}) and Lemma \ref{lemP}. To find the range of $\epsilon$, letting $\beta=c$\ (or $\alpha=1$) and we obtain that
\begin{eqnarray*}
0<\epsilon<(1-\lambda)c-2\lambda.
\end{eqnarray*}
\end{proof}

\section{Gaussian type upper bounds for sub-Laplacians}
\label{Gaussiansection}
One consequence of the (CK$\lambda$) assumption is that a Nash type inequality holds for $\Delta$. More precisely, the ultracontractivity property of the heat semigroup $\|H_t^\Delta\|_{1\rightarrow \infty}\leq \exp{\{c/t^\lambda\}}$\ for all $t>0$, is equivalent to the Nash inequality
\begin{eqnarray*}
\|f\|_2^2\left[\log_+{(\|f\|_2^2)}\right]^{1+1/\lambda}\leq C\langle\Delta f,\,f\rangle_{L^2},
\end{eqnarray*}
for $f$\ in the domain $\mathcal{D}(\Delta)$ of $\Delta$, and $\|f\|_1=1$. Note that for the Nash inequality it suffices to verify for Bruhat test functions. By \cite[Theorem 3.1]{Nashfractional}, for $\epsilon>\lambda/(1+\lambda)$, the fractional power $\Delta^{\epsilon}$\ satisfies the Nash inequality
\begin{eqnarray}
\|f\|_2^2\left[\log_+{(\|f\|_2^2)}\right]^{\epsilon(1+1/\lambda)}\leq C'\langle\Delta^{\epsilon}f,\,f\rangle_{L^2}\label{Nashineq}
\end{eqnarray}
for $f\in\mathcal{D}(\Delta^{\epsilon})$, $\|f\|_1=1$, which then implies that for $t>0$,
\begin{eqnarray*}
\|e^{-t\Delta^{\epsilon}}\|_{1\rightarrow\infty}\leq e^{c't^{-\beta}},\ \mbox{where } \beta=\frac{\lambda}{\epsilon(1+\lambda)-\lambda}>0.
\end{eqnarray*}
Requiring $\epsilon>\lambda/(1+\lambda)$\ is to guarantee that $\beta>0$. When Theorem \ref{formthm} holds, it then follows that $L$\ satisfies the local Nash inequality
\begin{eqnarray}
\label{NashforL}
\|f\|_2^2[\log_+{(\|f\|_2^2)}]^{\epsilon(1+1/\lambda)}\leq C\|f\|_2^2+C'\mathcal{E}_L(f,f).
\end{eqnarray}
For this inequality to imply the ultracontractivity of $H_t^L$, the exponent $\epsilon(1+1/\lambda)$\ needs to be larger than $1$, which again is ensured by $\epsilon>\lambda/(1+\lambda)$. Hence we have the following theorem.
\begin{theorem}
\label{gaussianestimatethm}
Let $G$\ be a compact connected metrizable group. Let $\Delta$\ and $L$ be two sub-Laplacians on $G$. Suppose the Gaussian semigroup $(\mu_t^\Delta)_{t>0}$\ associated with $\Delta$\ satisfies (CK$\lambda$), where $0<\lambda<1$. Suppose the intrinsic distances associated with $\Delta$\ and $L$\ satisfy (\ref{distcompare}), i.e.
\begin{eqnarray*}
d_L\leq C(d_\Delta)^c
\end{eqnarray*}
for some $0<c\leq 1$, $C>0$. Suppose $\lambda$\ and $c$\ satisfy
\begin{eqnarray}
\label{boundforlambda}
0<\lambda<\frac{-3+\sqrt{9+4(2+c)c}}{2(2+c)}\approx \frac{1}{3}c.
\end{eqnarray}
Then the Gaussian semigroup $(\mu_t^L)_{t>0}$\ of the sub-Laplacian $L$\ satisfies (CK$\gamma$) for any $\gamma$\ in the range
\begin{eqnarray*}
\frac{\lambda}{-(2+c)\lambda^2-3\lambda+c}<\gamma<\infty.
\end{eqnarray*}
Set $\gamma_{\lambda,c}:=\lambda\left(-(2+c)\lambda^2-3\lambda+c\right)^{-1}$. If furthermore 
\begin{eqnarray}
\label{boundforlambda1}
0<\lambda<\frac{-2+\sqrt{4+c(2+c)}}{2+c}\approx \frac{c}{4},
\end{eqnarray}
then $\gamma_{\lambda,c}<1$, the heat kernel $\mu_t^L$\ satisfies (CK$\gamma$) for any $\gamma$\ between $\gamma_{\lambda,c}$\ and $1$, and hence satisfies the Gaussian type upper bound ($C,C'>0$\ are constants that depend on $\gamma$)
\begin{eqnarray*}
\mu_t^L(x)\leq \exp{\left\{\frac{C}{t^\gamma}-\frac{C'd_L^2(x)}{t}\right\}},\ 0<t<1.
\end{eqnarray*}
\end{theorem}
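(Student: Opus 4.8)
The plan is to assemble the ingredients already laid out in the discussion preceding the statement: convert the heat kernel bound (CK$\lambda$) for $\Delta$ into a logarithmic Nash inequality, transport it to a Nash inequality for the fractional power $\Delta^\epsilon$ via \cite{Nashfractional}, use the Dirichlet form domination of Theorem \ref{formthm} to replace $\langle \Delta^\epsilon f,f\rangle$ by the Dirichlet form of $L$, and finally run the Nash-implies-ultracontractivity machinery to read off (CK$\gamma$) for $L$. The only genuinely new input beyond the cited results is the bookkeeping that pins down the admissible range of $\gamma$ and the two quadratic thresholds (\ref{boundforlambda}) and (\ref{boundforlambda1}).

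Concretely, I would first record that (CK$\lambda$) for $\Delta$ is equivalent to the Nash inequality with logarithmic power $1+1/\lambda$, and then, for any $\epsilon>\lambda/(1+\lambda)$, invoke \cite[Theorem 3.1]{Nashfractional} to obtain (\ref{Nashineq}), the Nash inequality for $\Delta^\epsilon$ with logarithmic power $\epsilon(1+1/\lambda)$. Since $\langle\Delta^\epsilon f,f\rangle=\|\Delta^{\epsilon/2}f\|_2^2$, Theorem \ref{formthm}(2) --- valid for $0<\epsilon<(1-\lambda)c-2\lambda$ --- lets me bound the right-hand side by $C(\mathcal{E}_L(f,f)+\|f\|_2^2)$, yielding the local Nash inequality (\ref{NashforL}) for $L$. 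Because the logarithmic power $\epsilon(1+1/\lambda)$ exceeds $1$ exactly when $\epsilon>\lambda/(1+\lambda)$, a self-improvement argument of the same type that underlies the (CK$\lambda$)--Nash equivalence for $\Delta$ turns (\ref{NashforL}) into the short-time ultracontractivity estimate $\|H_t^L\|_{1\to\infty}\le e^{C/t^\gamma}$ for $0<t<1$, with $\gamma=\lambda/(\epsilon(1+\lambda)-\lambda)$; together with the continuity of the density this is precisely (CK$\gamma$).

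It then remains to optimize over $\epsilon$. The two constraints $\lambda/(1+\lambda)<\epsilon<(1-\lambda)c-2\lambda$ describe a nonempty window iff $\lambda/(1+\lambda)<(1-\lambda)c-2\lambda$; clearing the denominator $(1+\lambda)$ and collecting terms recasts this as $(2+c)\lambda^2+3\lambda-c<0$, whose positive root is exactly the upper bound in (\ref{boundforlambda}). Since $\gamma=\lambda/(\epsilon(1+\lambda)-\lambda)$ is strictly decreasing in $\epsilon$, letting $\epsilon$ sweep the window makes $\gamma$ range over $(\gamma_{\lambda,c},\infty)$, where $\gamma_{\lambda,c}=\lambda(-(2+c)\lambda^2-3\lambda+c)^{-1}$ is the limiting value at $\epsilon=(1-\lambda)c-2\lambda$; this gives the first assertion. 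For the second, a direct computation shows $\gamma_{\lambda,c}<1$ iff $(2+c)\lambda^2+4\lambda-c<0$, whose positive root is the bound (\ref{boundforlambda1}); under this stronger hypothesis I can choose $\gamma\in(\gamma_{\lambda,c},1)$, and the implication that (CK$\gamma$) with $0<\gamma<1$ yields the Gaussian upper bound, recalled in the Background from \cite{orange}, finishes the proof.

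I expect the main obstacle to be the transition from the inhomogeneous local Nash inequality (\ref{NashforL}) to the quantitative short-time bound $e^{C/t^\gamma}$: one must check that the argument tolerates the additive $C\|f\|_2^2$ term and produces a bound valid on $0<t<1$ (rather than only for large $t$) with the correct exponent $\gamma$, and that the resulting semigroup genuinely admits a continuous density so that clause (i) of (CK$\gamma$) holds. By contrast, the exponent algebra leading to (\ref{boundforlambda}) and (\ref{boundforlambda1}), while requiring care, is entirely routine once the window for $\epsilon$ and the formula for $\gamma$ are in hand.
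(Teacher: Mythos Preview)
Your proposal is correct and follows essentially the same route as the paper's proof: Nash inequality for $\Delta$ from (CK$\lambda$), fractional-power Nash inequality via \cite{Nashfractional}, Dirichlet-form domination from Theorem~\ref{formthm}, and then the Nash-to-ultracontractivity direction, with identical exponent bookkeeping leading to (\ref{boundforlambda}) and (\ref{boundforlambda1}). The one point you flag as the main obstacle---absorbing the additive $C\|f\|_2^2$ in (\ref{NashforL})---is handled in the paper exactly as you would expect, by passing to $\widetilde{H}_t:=e^{-bt}H_t^L$ so that $\mathcal{E}_{\widetilde{L}}=\mathcal{E}_L+b\|\cdot\|_2^2$ swallows the lower-order term and yields a clean Nash inequality (\ref{NashforL1}); continuity of the density is dispatched in one line from boundedness plus the convolution structure.
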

\begin{remark}
In (\ref{boundforlambda}) and (\ref{boundforlambda1}), the approximate upper bounds $c/3$\ and $c/4$\ are for when $c$\ is close to $0$. Note that (\ref{boundforlambda}) implies $\gamma_{\lambda,c}>0$.
\end{remark}
\begin{proof}
As discussed above, the conclusions follow from the local Nash inequality (\ref{NashforL}) with admissible $\epsilon$, because by considering $\widetilde{H}_t:=e^{-bt}H_t^L$\ for some proper constant $b>0$, denote its generator by $-\widetilde{L}$, there is some constant $C''$\ such that
\begin{eqnarray}
\|f\|_2^2[\log_+{(\|f\|_2^2)}]^{\epsilon(1+1/\lambda)}\leq C''\mathcal{E}_{\widetilde{L}}(f,f).\label{NashforL1}
\end{eqnarray}
It then follows that $\widetilde{H}_t$\ satisfies an ultracontractivity bound for all $t>0$, hence so does $H_t^L$\ for $0<t<1$. Note also that the continuity of the density function $\mu_t^L(\cdot)$\ follows from its boundedness because of its convolution form. To obtain (\ref{NashforL}) using Theorem \ref{formthm}, we require that
\begin{eqnarray}
\label{epsilonrange}
\frac{\lambda}{1+\lambda}<\epsilon<(1-\lambda)c-2\lambda,
\end{eqnarray}
where the purpose of the first inequality is to obtain that $\Delta^{\epsilon}$\ satisfies a useful Nash inequality (i.e., with $\epsilon>\lambda/(1+\lambda)$), and the second inequality is to control $\mathcal{E}_{\Delta^{\epsilon}}$\ by $\mathcal{E}_L$ (i.e., (\ref{formcomparison})). Solving for the interval in (\ref{epsilonrange}) to be nonempty gives
\begin{eqnarray}
\label{computation}
(2+c)\lambda^2+3\lambda-c<0,
\end{eqnarray}
that is,
\begin{eqnarray*}
0<\lambda<\frac{-3+\sqrt{9+4(2+c)c}}{2(2+c)},
\end{eqnarray*}
which is (\ref{boundforlambda}). For small $c$, the upper bound is approximately $c/3$.

We next compute the ultracontractivity parameter $\gamma$\ for $H_t^L$. Setting
\begin{eqnarray*}
\epsilon\left(1+\frac{1}{\lambda}\right)=1+\frac{1}{\gamma}
\end{eqnarray*}
for the exponent in (\ref{NashforL1}), we get
\begin{eqnarray*}
\gamma=\frac{\lambda}{\epsilon+\epsilon\lambda-\lambda}.
\end{eqnarray*}
Note that for $\epsilon$\ in the range (\ref{epsilonrange}), the denominator is positive and $\gamma$\ is decreasing in $\epsilon$. Plugging in the bounds for $\epsilon$\ we get
\begin{eqnarray*}
\frac{\lambda}{-(2+c)\lambda^2-3\lambda+c}<\gamma<\infty.
\end{eqnarray*}
The lower bound is what we call $\gamma_{\lambda,c}$, which is increasing in $\lambda$. When $\gamma_{\lambda,c}$\ is less than $1$, the heat kernel $\mu_t^L$\ satisfies the full Gaussian type upper bound. Letting
\begin{eqnarray*}
\frac{\lambda}{-(2+c)\lambda^2-3\lambda+c}<1,
\end{eqnarray*}
we obtain (from (\ref{computation}), the denominator is positive)
\begin{eqnarray*}
0<\lambda<\frac{-2+\sqrt{4+c(2+c)}}{2+c},
\end{eqnarray*}
which is (\ref{boundforlambda1}).
For obtaining the full Gaussian type upper bound in this case, see for example \cite[Theorem 3.1]{orange}.
\end{proof}

The property (CK$\gamma$) has many applications to the heat equation associated with $L$. We list a few in the next corollary. See \cite[Section 2.2]{potentialtheory} for the definition of Brelot harmonic sheaf and \cite{Bendikovbook} for background and further references.

\begin{corollary}
Under the general hypotheses of Theorem \ref{gaussianestimatethm}, the heat kernel $\mu_t^L$\ of $L$\ is smooth in time for $0<t<1$, and the time derivatives $\partial_t^k\mu_t^L$, $k\in \mathbb{N}_+=\{1,2,3,\cdots\}$, satisfy
\begin{eqnarray}
\sup_{0<t<1}t^\gamma\log{\left\|\partial_t^k\mu_t^L\right\|_\infty}<\infty,\label{derivativeestimate}
\end{eqnarray}
with $\gamma_{\lambda,c}<\gamma<\infty$.

Under the additional assumption (\ref{boundforlambda1}), the following conclusions hold.\\
(1) The harmonic sheaf associated with the sub-Laplacian $L$\ is Brelot.\\
(2) The following parabolic Harnack principle holds. For any open connected subset $V\subset G$, any interval $I=(a,b)$, any compact sets $K_+,K_-\subset I\times V$\ satisfying $K_+\subset (r,b)\times V$, $K_-\subset (a,r)\times V$\ for some $a<r<b$, there exists a constant $C=C(I,V,K_+,K_-)\geq 1$, such that any non-negative local weak solution $u$\ of $(\partial_t+L)u=0$\ satisfies
\begin{eqnarray*}
\sup_{(t,x)\in K_+}u(t,x)\leq C\inf_{(t,x)\in K_-}u(t,x).
\end{eqnarray*}
(3) The time derivatives of the heat kernel, $\partial_t^k\mu_t^L$, further satisfy
\begin{eqnarray*}
\left|\partial_t^k\mu_t^L(x)\right|\leq \exp{\left\{\frac{C_k}{t^\gamma}-\frac{C_k'd_L^2(x)}{t}\right\}},\ \forall 0<t<1.
\end{eqnarray*}
\end{corollary}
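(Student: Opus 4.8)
The plan is to derive everything from the single fact, furnished by Theorem \ref{gaussianestimatethm}, that the semigroup $(\mu_t^L)_{t>0}$ satisfies (CK$\gamma$) for every $\gamma>\gamma_{\lambda,c}$, i.e. $\|H_t^L\|_{1\to\infty}\le e^{C/t^\gamma}$ for $0<t<1$, together with the structural features of the analytic, self-adjoint, nonnegative semigroup generated by $-L$. I would treat the two regimes separately: the smoothness in time and the derivative estimate (\ref{derivativeestimate}) hold under the general hypotheses (any $\gamma>\gamma_{\lambda,c}$), whereas conclusions (1)--(3) are invoked in the regime (\ref{boundforlambda1}), where $\gamma$ may be taken strictly below $1$.

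For the derivative bound I would first upgrade the ultracontractivity to $\|H_s^L\|_{2\to\infty}=\|H_s^L\|_{1\to 2}\le\|H_s^L\|_{1\to\infty}^{1/2}\le e^{C/s^\gamma}$, using self-adjointness together with Riesz--Thorin and the $L^1$-contractivity of the Markov semigroup. Since $-L$ is self-adjoint and nonnegative, spectral calculus gives $\|(-L)^k e^{-sL}\|_{2\to2}\le\sup_{u\ge0}u^k e^{-su}=(k/e)^k s^{-k}$. Splitting $t=t/3+t/3+t/3$ and writing
\[
\partial_t^k H_t^L=(-L)^k e^{-tL}=e^{-(t/3)L}\,\big[(-L)^k e^{-(t/3)L}\big]\,e^{-(t/3)L},
\]
I would bound the three factors in $L^2\to L^\infty$, $L^2\to L^2$, and $L^1\to L^2$ respectively to obtain $\|\partial_t^k H_t^L\|_{1\to\infty}\le C_k t^{-k}e^{C'/t^\gamma}$ for $0<t<1$. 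Because $H_t^L$ is convolution by $\mu_t^L$, the $L^1\to L^\infty$ norm of $\partial_t^k H_t^L$ equals $\|\partial_t^k\mu_t^L\|_\infty$, so $\|\partial_t^k\mu_t^L\|_\infty\le C_k t^{-k}e^{C'/t^\gamma}$. Applying $t^\gamma\log(\cdot)$ and letting $t\to0$, the terms $-k\,t^\gamma\log t$ and $t^\gamma\log C_k$ vanish, yielding (\ref{derivativeestimate}) for every $\gamma>\gamma_{\lambda,c}$.

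For the smoothness of $t\mapsto\mu_t^L$ on $0<t<1$ I would invoke that $e^{-tL}$ is an analytic semigroup with $\partial_t^k H_t^L=(-L)^k H_t^L$ bounded as above; the uniform $L^\infty$ control of the derivative kernels legitimizes differentiating the convolution representation under the integral, and each $\partial_t^k\mu_t^L$ is continuous by its convolution form, exactly as noted in the text for $\mu_t^L$ itself. For conclusions (1)--(3) I would pass to the regime (\ref{boundforlambda1}), where Theorem \ref{gaussianestimatethm} provides $\gamma_{\lambda,c}<1$, the full Gaussian upper bound on $\mu_t^L$, and finiteness and continuity of $d_L$, placing $L$ squarely within the framework where the established theory applies: the Brelot property of the harmonic sheaf and the parabolic Harnack principle follow from (CK$\gamma$) with $\gamma<1$ as developed in \cite{orange,potentialtheory,Bendikovbook}, while the Gaussian bound on the time derivatives in (3) is obtained by combining the $L^\infty$ derivative estimate above with the spatial Gaussian decay, following the single-derivative argument of \cite[Theorem 4.2]{centralderivative} applied to $(-L)^k e^{-tL}$. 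The hard part will be the rigorous passage from the operator-norm derivative bounds to pointwise statements about the kernel $\partial_t^k\mu_t^L$ --- both the justification of smoothness in $t$ and, for (3), the extraction of the spatial factor $e^{-C_k'd_L^2(x)/t}$ rather than merely the $L^\infty$ bound; the latter genuinely requires the localized estimate of \cite{centralderivative} in place of the clean spectral sandwich that suffices for (\ref{derivativeestimate}).
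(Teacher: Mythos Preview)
Your proposal is correct and follows essentially the same route as the paper: factor $\partial_t^k H_t^L$ via the semigroup property, bound the middle factor by spectral calculus $\|L^k e^{-sL}\|_{2\to2}\le (k/es)^k$ and the endpoints by ultracontractivity, then invoke the cited literature for (1)--(3). The paper's only cosmetic differences are a two-fold convolution split $\partial_t^k\mu_t^L=\mu_{t/2}^L*L^k\mu_{t/2}^L$ in place of your three-fold operator sandwich, and an explicit intermediate passage through the (CK$\sharp$) property (via \cite[Corollary~3.9]{orange} and \cite[Theorem~4.10]{potentialtheory}) for items (1)--(2), with \cite[Theorem~3.1]{orange} rather than \cite{centralderivative} cited for (3).
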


\begin{proof}
Note that $\mu_t^L=H_{t/2}^L\mu_{t/2}^L$\ is a solution of the heat equation $(\partial_t+L)u=0$\ and is smooth in time in the $L^2$\ sense (by spectral theory). By their convolution form $\partial_t^k\mu_t^L=L^k\left(\mu_{t/2}^L*\mu_{t/2}^L\right)=\mu_{t/2}^L*L^k\mu_{t/2}^L$\ we know that $\partial_t^k\mu_t^L$\ are continuous functions, $k\in \mathbb{N}_+$. The on-diagonal estimate for $\partial_t^k\mu_t^L$\ can be proved by observing that
\begin{eqnarray*}
\lefteqn{\|\partial_t^k\mu_t^L\|_{2}^2=\|L^k\mu_t^L\|_2^2=\|L^kH_t^L\|_{2\rightarrow\infty}^2}\\
&\leq& \|L^kH_{t/2}^L\|_{2\rightarrow 2}^2\|H_{t/2}^L\|_{2\rightarrow \infty}^2\leq \left(\frac{2k}{t}\right)^k\mu_t^L(e).
\end{eqnarray*}
See for example the proof of \cite[Lemma 2.2]{centralderivative}. The rest is shown by plugging in the (CK$\gamma$) bound for $\mu_t^L(e)$,
\begin{eqnarray*}
\lefteqn{|\partial_t^k\mu_t^L(x)|=|\mu_{t/2}^L*L^k\mu_{t/2}^L(x)|\leq \|\mu_{t/2}^L\|_2\|L^k\mu_{t/2}^L\|_2}\\
&\leq& (\mu_t^L(e))^{1/2}\left(\frac{4k}{t}\right)^{k/2}(\mu_{t/2}^L(e))^{1/2}\leq \exp{\left\{\frac{C_k}{t^\gamma}\right\}},\ \forall 0<t<1.
\end{eqnarray*}
Thus $\partial_t^k\mu_t^L$\ satisfies (\ref{derivativeestimate}).

Next we assume the additional assumption (\ref{boundforlambda1}). Items (1) and (2) are potential theoretic conditions. By \cite[Theorem 4.10]{potentialtheory}, both (1) and (2) follow from the (CK$\sharp$) property of $(\mu_t^L)_{t>0}$, which by definition means that $(\mu_t^L)_{t>0}$\ admits a continuous density function for all $t>0$, denoted again by $\mu_t^L(\cdot)$, and that for any compact set $K\subset G$\ with $e\notin K$,
\begin{eqnarray*}
\lim_{t\rightarrow 0}\sup_{x\in K}\mu_t(x)=0.
\end{eqnarray*}
To show this for $\mu_t^L$, we need \cite[Corollory 3.9]{orange}. More precisely, because the Dirichlet form $\mathcal{E}_L$\ on $G$\ admits the good algebra $\mathcal{B}(G)$\ as required by that corollary, then (CK$\gamma$) of $\mu_t^L$\ with 
$0<\gamma<1$\ (which in particular guarantees the (CKU$*$) property in that corollary) implies (CK$\sharp$) of $\mu_t^L$.
Finally, Item (3) concerns the full Gaussian type upper bound in the case $0<\gamma<1$. As in the last part of the proof of Theorem \ref{gaussianestimatethm}, it follows from the on-diagonal estimate (\ref{derivativeestimate}) by arguing as in \cite[Theorem 3.1]{orange}.
\end{proof}

Lastly in this section we present a corollary regarding a limiting case for the property (CK$\lambda$), namely when the parameter $0<\lambda<1$\ can be taken arbitrarily small. More precisely, we say that a heat semigroup $(H_t)_{t>0}$\ satisfies (CK$0^+$), if it satisfies (CK$\lambda$) for any $0<\lambda<1$, or equivalently, 
\begin{eqnarray*}
\lim_{t\rightarrow 0}e^{-t^{-\lambda}}\|H_t\|_{1\rightarrow \infty}=0, \ \forall 0<\lambda<1.
\end{eqnarray*}
Intuitively, the $L^\infty$\ bound of the heat kernel grows slower than any $\exp{(t^{-\lambda})}$\ function with $0<\lambda<1$, when $t$\ goes to $0$. Applying Theorem \ref{gaussianestimatethm}, we conclude that the property (CK$0^+$) is preserved by the Dirichlet form or distance comparisons considered above.
\begin{corollary}\label{cor-CK0}
Let $G$\ be a compact connected metrizable group. Let $\Delta$\ be a sub-Laplacian satisfying {\em (CK$0^+$)}. Let $L$\ be a sub-Laplacian. If in addition either (\ref{formcomparison})
\begin{eqnarray*}
\mathcal{E}_{\Delta^\epsilon}\leq C(\mathcal{E}_L+\|\cdot\|_2^2)\ \mbox{for some } 0<\epsilon< 1,\ C>0,
\end{eqnarray*}
or (\ref{distcompare})
\begin{eqnarray*}
d_L\leq C(d_\Delta)^c \ \mbox{for some } 0<c\leq 1,\ C>0
\end{eqnarray*}
holds, then $L$\ satisfies {\em (CK$0^+$)}.
\end{corollary}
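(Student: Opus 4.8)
The plan is to prove Corollary \ref{cor-CK0} by reducing the two hypotheses, the Dirichlet form comparison (\ref{formcomparison}) and the distance comparison (\ref{distcompare}), to a common footing and then applying Theorem \ref{gaussianestimatethm} (or the ingredients of its proof) in a limiting regime where $\lambda \to 0^+$. The key observation is that (CK$0^+$) for $\Delta$ means $(\mu_t^\Delta)_{t>0}$ satisfies (CK$\lambda$) for \emph{every} $0<\lambda<1$, so I have the freedom to choose $\lambda$ as small as I wish. This flexibility is exactly what is needed to make the constraints in Theorem \ref{gaussianestimatethm} satisfiable and to drive the resulting ultracontractivity parameter $\gamma$ for $L$ below any prescribed threshold.

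First I would treat the case where the distance comparison (\ref{distcompare}) holds, since this is the stronger hypothesis (it implies the Dirichlet form comparison via Theorem \ref{formthm}). Fix any target $0<\gamma_0<1$ at which I wish to verify (CK$\gamma_0$) for $L$. Since $c>0$ is fixed, I can choose $\lambda$ small enough that both (\ref{boundforlambda}) and (\ref{boundforlambda1}) hold (both upper bounds are positive whenever $c>0$, and are approximately $c/3$ and $c/4$ for small $c$). Then Theorem \ref{gaussianestimatethm} guarantees that $L$ satisfies (CK$\gamma$) for all $\gamma>\gamma_{\lambda,c}$ where $\gamma_{\lambda,c}=\lambda(-(2+c)\lambda^2-3\lambda+c)^{-1}$. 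The crucial point is that $\gamma_{\lambda,c}\to 0$ as $\lambda\to 0^+$: indeed the numerator $\lambda\to 0$ while the denominator $\to c>0$. Hence by taking $\lambda$ sufficiently small I can force $\gamma_{\lambda,c}<\gamma_0$, and (CK$\gamma_0$) for $L$ follows. Since $\gamma_0\in(0,1)$ was arbitrary, $L$ satisfies (CK$0^+$).

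For the case where only the Dirichlet form comparison (\ref{formcomparison}) is assumed—that is, $\mathcal{E}_{\Delta^\epsilon}\leq C(\mathcal{E}_L+\|\cdot\|_2^2)$ for some fixed $0<\epsilon<1$—I would bypass the distance-to-form step of Theorem \ref{formthm} and instead feed the comparison directly into the Nash inequality argument of Section \ref{Gaussiansection}. Because $\Delta$ satisfies (CK$\lambda$) for all $0<\lambda<1$, the fractional power $\Delta^\epsilon$ satisfies the Nash inequality (\ref{Nashineq}) for every $\lambda$ with $\epsilon>\lambda/(1+\lambda)$, i.e.\ for every $\lambda<\epsilon/(1-\epsilon)$; since $\epsilon$ is fixed and positive this holds for all sufficiently small $\lambda$. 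Combining (\ref{Nashineq}) with the assumed form comparison yields the local Nash inequality (\ref{NashforL}) for $L$ with exponent $\epsilon(1+1/\lambda)$, and this exponent tends to $+\infty$ as $\lambda\to 0^+$. Translating the exponent to an ultracontractivity parameter via $\epsilon(1+1/\lambda)=1+1/\gamma$ gives $\gamma=\lambda(\epsilon+\epsilon\lambda-\lambda)^{-1}\to 0$ as $\lambda\to 0^+$. Thus $L$ satisfies (CK$\gamma$) for arbitrarily small $\gamma$, which is (CK$0^+$).

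The main obstacle, and the step requiring the most care, is verifying that the limiting behavior $\gamma_{\lambda,c}\to 0$ (equivalently $\gamma\to 0$ in the second case) is genuinely uniform and that for each small $\lambda$ the relevant hypotheses of Theorem \ref{gaussianestimatethm} actually hold rather than merely holding in the limit. Concretely I must confirm that the admissibility window (\ref{epsilonrange}), namely $\lambda/(1+\lambda)<\epsilon<(1-\lambda)c-2\lambda$, stays nonempty as $\lambda\to 0^+$: its left endpoint tends to $0$ while its right endpoint tends to $c>0$, so the window is eventually nonempty, which is precisely the content of (\ref{boundforlambda}). Once this uniform nonemptiness is secured, the monotonicity of $\gamma_{\lambda,c}$ in $\lambda$ (noted in the proof of Theorem \ref{gaussianestimatethm}) makes the limit argument routine. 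I would present both cases compactly, noting that the distance-comparison case subsumes the form-comparison case in strength but that a direct Nash-inequality argument handles the latter without invoking the intrinsic distance at all.
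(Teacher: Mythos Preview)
Your proposal is correct and uses essentially the same ideas as the paper: exploit the freedom to take $\lambda$ arbitrarily small, feed into the Nash inequality (\ref{Nashineq}) for $\Delta^\epsilon$, and observe that the resulting ultracontractivity parameter $\gamma=\lambda(\epsilon+\epsilon\lambda-\lambda)^{-1}$ tends to $0$ as $\lambda\to 0^+$. The only organizational difference is that the paper treats the form-comparison case first and then reduces the distance-comparison case to it in one line via Theorem \ref{formthm} (obtaining some fixed $\epsilon\in(0,c)$), whereas you handle the distance case by invoking the full Theorem \ref{gaussianestimatethm} and tracking $\gamma_{\lambda,c}\to 0$; the paper's ordering is slightly more economical but the content is the same.
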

\begin{proof}
Suppose (\ref{formcomparison}) holds. By \cite[Theorem 3.1]{Nashfractional}, because $H_t^\Delta$\ satisfies (CK$\lambda$) for arbitrarily small $\lambda>0$, by picking $\lambda$\ small enough, the condition $\epsilon>\frac{\lambda}{1+\lambda}$\ is met and $\Delta^{\epsilon}$\ satisfies the Nash inequality (\ref{Nashineq}). Then by the same argument as the beginning of the proof of Theorem \ref{gaussianestimatethm}, $L$\ satisfies the local Nash inequality, and hence the ultracontractivity condition (CK$\gamma$) for $0<t<1$\ with
\begin{eqnarray*}
\gamma=\frac{\lambda}{\epsilon+\epsilon\lambda-\lambda}.
\end{eqnarray*}
Letting $\lambda\rightarrow 0$\ then gives (CK$\gamma$) for arbitrarily small $\gamma>0$. So $L$\ satisfies (CK$0^+$).

Next suppose the distance comparison (\ref{distcompare}) holds. By Theorem \ref{formthm}, (\ref{formcomparison}) is satisfied for some $0<\epsilon<c\leq 1$, so we conclude that $L$\ satisfies (CK$0^+$).
\end{proof}

\begin{remark}
  If desired, one can derive more quantitative versions of Corollary \ref{cor-CK0} where the hypothesis involves a particular behavior of $\log \mu^{\Delta}_t(e)$ when $t$ tends to zero (i.e., $\log \mu^{\Delta}_t(e)\le C(\log 1/t)^\lambda$ for some $\lambda>1$ when $t$ goes to $0$), and the conclusion is more precise than saying that $\mu^L_t$ satisfies  (CK$0^+$).  \end{remark}

\section{Examples}
\label{examplesection}
Recall that a (standard) Milnor basis for the Lie algebra of $SU(2)$ is any basis $\{E_1,E_2,E_3\}$
such that $[E_1,E_2]=E_3, [E_2,E_3]=E_1$ and $[E_3,E_1]=E_2$.
Let $G=\prod_{i=1}^{\infty} SU(2)$. Let $\{E_i^1,E_i^2,E_i^3\}$ be a Milnor basis on the $i$th factor $SU(2)$ , $i\in \mathbb{N}_+$. For sequences of positive numbers $(a_i)_{i=1}^\infty$, $(b_i)_{i=1}^\infty$, $(c_i)_{i=1}^\infty$\ that tend to infinity as $i\rightarrow \infty$, let
\begin{eqnarray*}
\Delta&:=&-\sum_{i=1}^{\infty} a_i\left((E_i^1)^2+(E_i^2)^2+(E_i^3)^2\right)=:\sum_{i=1}^{\infty} a_i\Delta_i;\\
L&:=&-\left(\sum_{i=1}^{\infty} b_i(X_i^1)^2 + \sum_{i=1}^{\infty} c_i(X_i^2)^2\right).
\end{eqnarray*}
Here $X_i^1:=E_i^1$, $X_i^2:=E_i^2+E_{i+1}^2$.
The distances and heat kernels associated with each bi-invariant Laplacian $\Delta_i$\ on $SU(2)$\ and with the bi-invariant Laplacians $\Delta=\sum_{i=1}^{\infty} a_i\Delta_i$\ on $G=\prod_{i=1}^{\infty} SU(2)$, are  well studied. We first briefly review some results that we use here. The operator $L$\ is more complicated than $\Delta$\ in two aspects. First, each $b_i(X_i^1)^2 + c_i(X_i^2)^2$\ is not an operator on the $i$th factor $SU(2)$\ because the vector field $X_i^2=E_i^2+E_{i+1}^2$\ lives on two successive factors. Second, $L$\ is only a sub-Laplacian as none of the $E_i^3$\ directions are included. Our goal is  to find some conditions on the coefficients $(a_i)_{i=1}^\infty$ and $(b_i)_{i=1}^\infty,(c_i)_{i=1}^\infty$, that guarantee a comparison between the intrinsic distances $d_\Delta$\ and $d_L$, and between Dirichlet forms associated with these operators. Once such comparisons are established, we will deduce upper-bounds for the heat kernel associated with $L$.

\paragraph{Review of the geometry of $SU(2)$\ associated with any bi-invariant Laplacian.}
Let $\{E_1,E_2,E_3\}$\ be a Milnor basis for $SU(2)$. Let $\Delta_{SU(2)}:=-\left((E_1)^2+(E_2)^2+(E_3)^2\right)$\ denote the canonical bi-invariant Laplacian on $SU(2)$. In classical Lie group theory there are two widely used coordinates - the so-called canonical coordinates of the first and second kinds, cf. \cite[Section 2.10]{varadarajan}. Namely, there is some $\eta>0$\ such that the following maps $\Phi$, $\Psi$\ are smooth bijections between a cube $Q_\eta:=(-\eta,\eta)^3$\ and neighborhoods $U_\eta$, $V_\eta$\ of the identity $e$\ in $SU(2)$, with $\Phi(0)=\Psi(0)=e$. The maps are given by
\begin{eqnarray*}
\Phi:Q_\eta&\rightarrow& U_\eta\subset SU(2)\\
(x_1,x_2,x_3) &\mapsto& \exp{\{x_1E_1+x_2E_2+x_3E_3\}};
\end{eqnarray*}
\begin{eqnarray*}
\Psi:Q_\eta&\rightarrow& V_\eta\subset SU(2)\\
(y_1,y_2,y_3) &\mapsto& \exp{\{y_1E_1\}}\exp{\{y_2E_2\}}\exp{\{y_3E_3\}}.
\end{eqnarray*}
Moreover, the following formula and estimate hold.
\begin{itemize}
\item The distance defined as the length of the shortest curve (geodesic) connecting two points, agrees with the intrinsic distance. See \cite[Sections 5.5.3, 5.5.4]{SturmGeom}.
\item The one parameter subgroup $\exp{\left\{s\left(x_1E_1+x_2E_2+x_3E_3\right)\right\}}$, $s\in\mathbb{R}$, is a local geodesic with constant speed $\sqrt{(x_1)^2+(x_2)^2+(x_3)^2}$.
\item The distance between $e$\ and $\Phi(x_1,x_2,x_3)=\exp{\{x_1E_1+x_2E_2+x_3E_3\}}$\ is \begin{eqnarray}
d_{\Delta_{SU(2)}}(\Phi(x_1,x_2,x_3))=\sqrt{(x_1)^2+(x_2)^2+(x_3)^2},\label{distformula1stkind}
\end{eqnarray}
for $(x_1,x_2,x_3)\in Q_\eta$.
\item There are constants $c,C>0$\ such that (here we write $d_\Delta$\ for $d_{\Delta_{SU(2)}}$)
\begin{eqnarray}
\label{distformula2ndkind}
cd_{\Delta}(\Psi(y_1,y_2,y_3))&\leq& \max_{j=1,2,3}{|y_j|}\leq Cd_{\Delta}(\Psi(y_1,y_2,y_3)),
\end{eqnarray}
for $(y_1,y_2,y_3)\in Q_\eta$.
\end{itemize}
If we scale the canonical Laplacian by a factor $a>0$, i.e., consider
\begin{eqnarray*}
\Delta_a:=-a\left((E_1)^2+(E_2)^2+(E_3)^2\right)=a\Delta_{SU(2)},
\end{eqnarray*}
then
\begin{eqnarray*}
d_{\Delta_a}=\frac{1}{\sqrt{a}}d_{\Delta_{SU(2)}}.
\end{eqnarray*}
Recall that the diameter of $G$\ with respect to a distance $d$\ is $\mbox{diam}_d(G)=\sup_{x,y\in G}{d(x,y)}$. If $d$\ is the intrinsic distance of some sub-Laplacian $L$, we simply write $\mbox{diam}_L(G)$. The diameter of $SU(2)$\ with respect to $d_{\Delta_{SU(2)}}$\ is
\begin{eqnarray*}
\mbox{diam}_{\Delta_{SU(2)}}(SU(2))=2\pi,
\end{eqnarray*}
cf. \cite[Section 2.5]{SU2}. It follows that any element $h\in SU(2)$\ can be written as a product of $N$\ elements $h_k\in U_\eta$\ (respectively, $h_k\in V_\eta$), $h=\prod_{k=1}^{N} h_k$, and $N$\ is uniformly bounded above. Say the ball $B(e,2r)=\{h\in G:d_{\Delta_{SU(2)}}(h)<2r\}$\ is contained in $U_\eta$\ and $V_\eta$, then $N<2\pi/r=:N_0$. 
\begin{remark}
\label{expremark}
On $SU(2)$, every element is of the form $e^{uX}$\ for some unit left-invariant vector field $X$\ and some $u\in [0, \mbox{diam}_{\Delta_{SU(2)}}(SU(2))]=[0,2\pi]$\ (recall that $SU(2)$\ is isometric to the sphere $S^3$\ in $\mathbb{R}^4$\ with radius $2$; on $S^3$, such exponentials represent the great circles). Here $X=x_1E_1+x_2E_2+x_3E_3$\ is unit (with respect to the distance $d_{\Delta_{SU(2)}}$) means $\sqrt{(x_1)^2+(x_2)^2+(x_3)^2}=1$.
\end{remark}

\paragraph{Review of the geometry of $G=\prod SU(2)$\ associated with a bi-invariant Laplacian $\Delta=\sum a_i\Delta_i$.}
Next we review some facts for the product space $G=\prod SU(2)$. Any element $g\in G$\ is a product $g=\prod_ig_i$\ of elements $g_i$\ in the $i$th factor $SU(2)$. Recall that we call $\Delta=\sum_i\Delta_{a_i}$. By Remark \ref{expremark}, for each $i$, with respect to the basis $\{\sqrt{a_i}E_i^1, \sqrt{a_i}E_i^2, \sqrt{a_i}E_i^3\}$,
\begin{eqnarray*}
g_i=e^{u_i\left(x_i^1\sqrt{a_i}E_i^1+x_i^2\sqrt{a_i}E_i^2+x_i^3\sqrt{a_i}E_i^3\right)},
\end{eqnarray*}
for some $u_i,x_i^j$\ satisfying
\begin{eqnarray}
&&0\leq u_i\leq \mbox{diam}_{\Delta_{a_i}}(SU(2))=\frac{2\pi}{\sqrt{a_i}},\label{uibound}\\
&&\sqrt{(x_i^1)^2+(x_i^2)^2+(x_i^3)^2}=1.\notag
\end{eqnarray}
Here the subscript $i$\ indicates that everything is on the $i$th factor $SU(2)$.

The condition for $\prod_{i=1}^{\infty} SU(2)$\ to have finite diameter is (cf. \cite{orange})
\begin{eqnarray}
\label{aicondition}
\sum_{i=1}^{\infty}\frac{1}{a_i}<\infty.
\end{eqnarray}
Under this condition, the following properties are satisfied.
\begin{itemize}
\item[(i)] The distance $d_\Delta$\ is continuous on $G$, and
\begin{eqnarray}
\label{Pythagorean}
(d_\Delta(g))^2=\sum_{i=1}^\infty (d_{\Delta_{a_i}}(g_i))^2
\end{eqnarray}
by the Pythagorean theorem.
\item[(ii)] Conditions (\ref{uibound}) and (\ref{aicondition}) imply that the sum of squares of $u_i$\ is finite, then each
$g_i$\ can be rewritten as
\begin{eqnarray*}
g_i=\exp{\left\{\left(\sum_{l=1}^{\infty}u_l^2\right)^{1/2}\frac{u_i}{(\sum_{l=1}^{\infty}u_l^2)^{1/2}} \left(x_i^1\sqrt{a_i}E_i^1+x_i^2\sqrt{a_i}E_i^2+x_i^3\sqrt{a_i}E_i^3\right)\right\}}.
\end{eqnarray*}
Setting
\begin{eqnarray*}
\beta_i^j:=\frac{u_ix_i^j}{(\sum_{l=1}^{\infty}u_l^2)^{1/2}},\ i\in\mathbb{N}_+,\ j=1,2,3,
\end{eqnarray*}
so that $\sum_{i=1}^\infty \left((\beta_i^1)^2+(\beta_i^2)^2+(\beta_i^3)^2\right)$ is $1$, we then have
\begin{eqnarray*}
g=\prod_{i=1}^{\infty}g_i=\prod_{i=1}^{\infty}\exp{\left\{\left(\sum_{l=1}^{\infty}u_l^2\right)^{1/2}(\beta_i^1\sqrt{a_i}E_i^1+\beta_i^2\sqrt{a_i}E_i^2+\beta_i^3\sqrt{a_i}E_i^3)\right\}}.
\end{eqnarray*}
\item[(iii)] The geodesic from $e$\ to $g$\ with unit speed is then
\begin{eqnarray*}
&&t\mapsto \exp{\left\{t\sum_{i=1}^{\infty}(\beta_i^1\sqrt{a_i}E_i^1+\beta_i^2\sqrt{a_i}E_i^2+\beta_i^3\sqrt{a_i}E_i^3)\right\}},\\
&&\mbox{for }0\leq t\leq \left(\sum_{l=1}^{\infty}u_l^2\right)^{1/2}\leq 2\pi\left(\sum_{l=1}^{\infty}a_l^{-1}\right)^{1/2}.
\end{eqnarray*}
Here we used the commutativity between different $SU(2)$\ factors.
\end{itemize}

In summary, assuming (\ref{aicondition}), any element $g\in G$\ is of the form $g=e^{tX}$\ for some unit direction $X$, and $d_\Delta(g)=t$. The distance given by geodesic and the intrinsic distance coincide.

\paragraph{Review of the heat kernel condition (CK$\lambda$) on $G=\prod SU(2)$.}
Let $\lambda_k$\ be the spectral gap of the Laplacian $\Delta_{a_k}=-a_k\left((E_k^1)^2+(E_k^2)^2+(E_k^3)^2\right)$\ on the $k$th factor $SU(2)$. Then $\lambda_k=a_k\lambda_0$, where $\lambda_0$\ denotes the spectral gap of $\Delta_{SU(2)}$. Let
\begin{eqnarray}
\label{countingfunction}
N(r):=3\sharp\{k:\lambda_k\leq r\},\ r>0.
\end{eqnarray}
Assume that
\begin{eqnarray*}
\lim_{r\rightarrow\infty}\frac{1}{r}\log{N(r)}=0.
\end{eqnarray*}
By \cite[Theorem 8.1(1)]{Elliptic}, this condition is equivalent to that $\mu_t^\Delta$\ admits a continuous density function $\mu_t^\Delta(\cdot)$. By \cite[Theorem 8.1(2)]{Elliptic} (see also \cite{centralJFA}), the heat kernel $\mu_t^\Delta(\cdot)$\ satisfies
\begin{eqnarray*}
\exp{\left\{C_1N\left(\frac{C_2}{t}\right)\right\}}\leq \mu_t^\Delta (e)\leq \exp{\left\{c_1\int_0^\infty \frac{N(\xi/t)e^{-c_2\xi}}{\xi}\,d\xi\right\}}
\end{eqnarray*}
for some constants $C_1,C_2,c_1,c_2>0$. If $N(r)\lesssim r^\lambda$\ (i.e., $N(r)\leq cr^\lambda$\ for some $c>0$), then the upper bound satisfies
\begin{eqnarray*}
\exp{\left\{c_1\int_0^\infty \frac{N(\xi/t)e^{-c_2\xi}}{\xi}\,d\xi\right\}}\leq \exp{\left\{c_1\int_0^\infty \zeta^{\lambda-1}e^{-c_2t\zeta}\,d\zeta\right\}}\leq \exp{\left\{\frac{C}{t^\lambda}\right\}}
\end{eqnarray*}
for some constant $C>0$, thus the Gaussian semigroup $(\mu_t^\Delta)_{t>0}$\ satisfies (CK$\lambda$). Conversely, if $(\mu_t^\Delta)_{t>0}$\ satisfies (CK$\lambda$), then the lower bound of $\mu_t^\Delta(e)$\ implies that $N(r)\lesssim r^\lambda$.

The property $N(r)\lesssim r^\lambda$\ holds for $a_k=k^{1/\lambda}$, $k\in \mathbb{N}_+$, for example. The property (CK$\lambda$) with $0<\lambda<1$\ implies (\ref{aicondition}), see \cite[Proposition 5.17]{orange}.

\paragraph{Estimates for $d_L(g)$.} We now develop the key arguments for the comparison of the two distances $d_\Delta$\ and $d_L$. Our conclusion is the following proposition.
\begin{proposition}
\label{example}
Let $G=\prod_{i=1}^\infty SU(2)$. 
Let $\{E_i^1,E_i^2,E_i^3\}$, $i\in\mathbb{N}_+$\ be copies of a Milnor basis on the $i$th factor $SU(2)$. Let $X_i^1:=E_i^1$, $X_i^2:=E_i^2+E_{i+1}^2$. Define
\begin{eqnarray*}
\Delta&:=&-\sum_{i=1}^{\infty} a_i\left((E_i^1)^2+(E_i^2)^2+(E_i^3)^2\right);\\
L&:=&-\left(\sum_{i=1}^{\infty} b_i(X_i^1)^2 + \sum_{i=1}^{\infty} c_i(X_i^2)^2\right),
\end{eqnarray*}
where $(a_i)_{i=1}^{\infty}$, $(b_i)_{i=1}^{\infty}$, $(c_i)_{i=1}^{\infty}$\ are sequences of positive real numbers that tend to infinity as $i$\ does. Let $(\mu_t^\Delta)_{t>0}$\ and $(\mu_t^L)_{t>0}$\ denote the Gaussian semigroups corresponding to $\Delta$\ and $L$, respectively.

(1) If the coefficients satisfy
\begin{eqnarray}
\label{coeffcondition1}
\lefteqn{\sup_{i\in\mathbb{N}_+}\left\{\frac{a_i}{b_i}\right\}<\infty,\ \sum_{i=1}^{\infty}\frac{a_i^{1/3}}{b_i^{2/3}}<\infty,\ 
\sum_{i=1}^{\infty}\frac{a_i^{1/5}}{b_i^{3/5}}<\infty,}\notag\\
&&\sum_{i=1}^{\infty}\frac{a_i^{1/3}}{c_i^{2/3}}<\infty,\ 
\sum_{i=1}^{\infty}\frac{a_i^{1/5}}{c_i^{3/5}}<\infty,
\end{eqnarray}
then the intrinsic distances $d_\Delta$\ and $d_L$\ associated with $\Delta$\ and $L$\ satisfy
\begin{eqnarray*}
d_L\leq C(d_\Delta)^{1/3}
\end{eqnarray*}
for some $C>0$.

(2) If $(\mu_t^\Delta)_{t>0}$\ satisfies (CK$\lambda$) for some $0<\lambda<(-9+\sqrt{109})/14\approx 0.1$, then for any $(b_i)_{i=1}^{\infty},(c_i)_{i=1}^{\infty}$\ satisfying
\begin{eqnarray}
\label{coeffcondition2}
\sup_{i\in\mathbb{N}_+}\left\{\frac{a_i}{b_i}\right\}<\infty,\ \sup_{i\in\mathbb{N}_+}\left\{\frac{a_i}{c_i}\right\}<\infty,
\end{eqnarray}
$(\mu_t^L)_{t>0}$\ satisfies (CK$\gamma$) for any $3\lambda(-7\lambda^2-9\lambda+1)^{-1}=:\gamma_\lambda<\gamma<\infty$. If furthermore $0<\lambda<(-6+\sqrt{43})/7\approx 0.08$, then $(\mu_t^L)_{t>0}$\ satisfies the Gaussian type upper bound that for any $\gamma_\lambda<\gamma<1$,
\begin{eqnarray*}
\mu_t^L(x)\leq \exp{\left\{\frac{C}{t^\gamma}-\frac{C'd_L^2(x)}{t}\right\}},\ \forall 0<t<1.
\end{eqnarray*}
\end{proposition}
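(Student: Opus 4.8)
The plan is to derive statement (2) from statement (1) together with Theorem \ref{gaussianestimatethm}, and to prove (1) by building explicit horizontal curves for $L$ and estimating their length against $d_\Delta$.

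I would first observe that (2) is exactly Theorem \ref{gaussianestimatethm} specialized to $c=1/3$, once (1) is available. Substituting $c=1/3$ into the three thresholds of that theorem gives precisely $\frac{-3+\sqrt{9+4(2+c)c}}{2(2+c)}=\frac{-9+\sqrt{109}}{14}$, the lower endpoint $\frac{\lambda}{-(2+c)\lambda^2-3\lambda+c}=\frac{3\lambda}{-7\lambda^2-9\lambda+1}=\gamma_\lambda$, and $\frac{-2+\sqrt{4+c(2+c)}}{2+c}=\frac{-6+\sqrt{43}}{7}$, which are the quantities in (2). The only point needing care is that Theorem \ref{gaussianestimatethm} (through part (1)) needs the full hypothesis (\ref{coeffcondition1}), while (2) assumes only (\ref{coeffcondition2}). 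To bridge this I would use that (CK$\lambda$) for $(\mu^\Delta_t)$ is equivalent to $N(r)\lesssim r^\lambda$, i.e. $\#\{k:a_k\le s\}\lesssim s^\lambda$; reordering so that $(a_k)$ is nondecreasing yields $a_k\gtrsim k^{1/\lambda}$, hence $\sum_k a_k^{-p}<\infty$ for every $p>\lambda$. Since (\ref{coeffcondition2}) forces $b_i,c_i\ge c\,a_i$, each summand in (\ref{coeffcondition1}) is bounded (up to constants) by $a_i^{-1/3}$ or $a_i^{-2/5}$, and $\lambda<\frac{-9+\sqrt{109}}{14}<\tfrac13$ makes both exponents exceed $\lambda$; thus (\ref{coeffcondition1}) holds and (2) follows from (1).

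For (1) the crux is the bracket geometry of the horizontal frame. Setting $\tilde X_i^1=\sqrt{b_i}\,X_i^1$ and $\tilde X_i^2=\sqrt{c_i}\,X_i^2$ (the $L$-orthonormal fields), the Milnor relations together with $[E_i^1,E_{i+1}^2]=0$ give $E_i^1=b_i^{-1/2}\tilde X_i^1$, $E_i^3=(b_ic_i)^{-1/2}[\tilde X_i^1,\tilde X_i^2]$ and, up to a constant, $E_i^2=(b_i^{2}c_i)^{-1/2}[[\tilde X_i^1,\tilde X_i^2],\tilde X_i^1]$. So $E_i^1,E_i^3,E_i^2$ are reached at bracket depths $1,2,3$, which is exactly what produces the exponent $1/3$; moreover the coupling term $E_{i+1}^2$ carried by $X_i^2$ cancels in these balanced commutators (because $X_i^1=E_i^1$ acts trivially on the $(i{+}1)$st factor), so each bracket is supported on the single factor $i$. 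Since $d_L$ is the Carnot--Carath\'eodory distance of this frame, it suffices to bound, for each $g$, the $L$-length of one horizontal curve from $e$ to $g$. Using the reviewed bi-invariant geometry I would write $g=\prod_i g_i$ with $g_i=\exp(\alpha_i^1E_i^1)\exp(\alpha_i^2E_i^2)\exp(\alpha_i^3E_i^3)$ in coordinates of the second kind (cf. (\ref{distformula2ndkind})), where $|\alpha_i^j|\le C u_i\sqrt{a_i}\le C'$ and $\sum_i u_i^2=d_\Delta(g)^2=:t^2$.

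The curve is assembled in three stages. The depth-one part $\exp(\sum_i\alpha_i^1E_i^1)$ is a single horizontal segment of $L$-length $\big(\sum_i(\alpha_i^1)^2/b_i\big)^{1/2}\le Ct$, using only $\sup_i a_i/b_i<\infty$. The depth-two and depth-three parts are realized factor by factor by the standard approximate-commutator construction, at $L$-cost $\le C|\alpha_i^3|^{1/2}(b_ic_i)^{-1/4}$ and $\le C|\alpha_i^2|^{1/3}(b_i^2c_i)^{-1/6}$; inserting $|\alpha_i^j|\le Cu_i\sqrt{a_i}$ and applying H\"older's inequality in $i$ (exponents $(4,4/3)$ and $(6,6/5)$) bounds these stages by $Ct^{1/2}\big(\sum_i a_i^{1/3}(b_ic_i)^{-1/3}\big)^{3/4}$ and $Ct^{1/3}\big(\sum_i a_i^{1/5}(b_i^2c_i)^{-1/5}\big)^{5/6}$. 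A weighted AM--GM split, $a_i^{1/3}(b_ic_i)^{-1/3}\le\tfrac12 a_i^{1/3}b_i^{-2/3}+\tfrac12 a_i^{1/3}c_i^{-2/3}$ and $a_i^{1/5}(b_i^2c_i)^{-1/5}\le\tfrac23 a_i^{1/5}b_i^{-3/5}+\tfrac13 a_i^{1/5}c_i^{-3/5}$, shows both sums converge under exactly the four remaining conditions in (\ref{coeffcondition1}). For $t\le1$ the three contributions are $\le Ct,\ Ct^{1/2},\ Ct^{1/3}$, all dominated by $t^{1/3}$, so $d_L(g)\le Ct^{1/3}$. For $t\ge1$ I would instead invoke $\mathrm{diam}_L(G)<\infty$: running the depth-two and depth-three stages simultaneously over the two color classes of the coupling pattern (a path graph) reduces finiteness of the diameter to $\sum_i b_i^{-1}<\infty$ and $\sum_i c_i^{-1}<\infty$, both of which follow from $\sum_i a_i^{1/3}b_i^{-2/3}<\infty$ and $\sum_i a_i^{1/3}c_i^{-2/3}<\infty$; then $d_L(g)\le\mathrm{diam}_L(G)\le\mathrm{diam}_L(G)\,t^{1/3}$. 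Combining the two regimes yields $d_L\le C(d_\Delta)^{1/3}$.

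The main obstacle is making the depth-two and depth-three estimates rigorous: one must upgrade the formal bracket identities to genuine horizontal curves with the stated lengths, control the higher-order Baker--Campbell--Hausdorff remainders, and confirm that the $E_{i+1}^2$-coupling really cancels, so that both the factor-by-factor and the two-colored simultaneous constructions are legitimate and the lengths aggregate in $\ell^1$, respectively $\ell^2$, as claimed. I expect this sub-Riemannian length bookkeeping, rather than the exponent arithmetic, to carry the difficulty; a clean route is to prove the estimates on each finite-dimensional quotient $G_\alpha=\prod_{i\le\alpha}SU(2)$ by the ball--box theorem, uniformly in $\alpha$, and then pass to the limit via $d_L(g)=\sup_\alpha d_{L,\alpha}(\pi_\alpha g)$.
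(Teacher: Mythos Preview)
Your outline is structurally the same as the paper's: part~(2) is deduced from part~(1) exactly as you describe (Theorem~\ref{gaussianestimatethm} at $c=1/3$, together with the observation that (CK$\lambda$) with $\lambda<1/3$ gives $\sum a_i^{-1/3}<\infty$, so (\ref{coeffcondition2}) forces (\ref{coeffcondition1})), and part~(1) follows the same pattern of identifying the bracket depths $1,2,3$ for $E_i^1,E_i^3,E_i^2$, writing $g_i$ in second-kind coordinates, treating the $E_i^1$ contributions as a single horizontal segment, the $E_i^2,E_i^3$ contributions factor by factor, and aggregating by H\"older. The one substantive difference is the commutator step you single out as the obstacle. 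The paper does not use approximate commutators or ball--box at all; it quotes an \emph{exact} identity on $SU(2)$ from \cite[Theorem~5.10]{U2},
\[
e^{f(s,t)E_3}=e^{-\tau E_2}\,e^{\frac{s}{2}E_1}\,e^{tE_2}\,e^{-sE_1}\,e^{-tE_2}\,e^{\frac{s}{2}E_1}\,e^{\tau E_2},
\]
with $|f(s,t)|\ge c|st|$, $|\tau|\le|t|/2$. Because $E_{i+1}^2$ commutes with the $i$th factor and the $E_2$-exponents on the right sum to zero, replacing each $E_i^2$ by $X_i^2$ leaves the identity intact, giving an exact horizontal word for $e^{zE_i^3}$; iterating once via $E_i^2=[E_i^3,E_i^1]$ gives the depth-three bound. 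This removes any BCH remainder or uniformity-in-$i$ issue and yields $d_L(e^{zE_i^3})\le C(b_i^{-1/2}+c_i^{-1/2})|z|^{1/2}$, $d_L(e^{zE_i^2})\le C(b_i^{-1/2}+c_i^{-1/2})|z|^{1/3}$. Your geometric-mean constants $(b_ic_i)^{-1/4}$, $(b_i^2c_i)^{-1/6}$ are nominally sharper, but after your AM--GM split they land on the same five summability conditions, so the extra sharpness buys nothing. Two small points: second-kind coordinates only cover a neighborhood of $e$, so each $g_i$ must first be written as a product of at most $N_0$ short pieces before the commutator estimates apply (the paper does this explicitly); and your $t\ge1$ argument via the two-coloring is unnecessary once one uses that $d_\Delta$ is bounded on $G$, which the paper takes from the standing assumption $\sum a_i^{-1}<\infty$.
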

For example, setting $a_i=i^{1/\lambda}$, $0<\lambda<1$.
\begin{itemize}
\item For $0<\lambda<1/3$, the coefficient condition (\ref{coeffcondition1}) amounts to $b_i,c_i\geq C a_i$\ for some $C>0$. In particular, one can take $b_i,c_i\sim a_i=i^{1/\lambda}$.
\item For $1/3\leq \lambda<1$, the choice of $b_i,c_i=i^{\epsilon+1/\lambda}$, where $\epsilon>(3\lambda-1)/2\lambda$, satisfies the coefficient condition (\ref{coeffcondition1}).
\end{itemize}
\begin{proof}
From the definitions of the associated intrinsic distances $d_\Delta$, $d_L$, it is straightforward to deduce that for any $i\in\mathbb{N}_+$,
\begin{eqnarray*}
d_L(e^{zX_i^1})&\leq&\frac{1}{\sqrt{b_i}}z,\ \mbox{for }0\leq z\leq 2\pi;\\
d_L(e^{zX_i^2})&\leq&\frac{1}{\sqrt{c_i}}z,\ \mbox{for }0\leq z\leq 2\pi;\\
d_\Delta(e^{zE_i^j})&=&\frac{1}{\sqrt{a_i}}z,\ \mbox{for }0\leq z\leq 2\pi,\ j=1,2,3.
\end{eqnarray*}
We now estimate $d_{L}(e^{zE_i^j})$, $j=1,2,3$. As $X_i^1=E_i^1$,
\begin{eqnarray}
\label{distanceE1}
d_{L}(e^{zE_i^1})\leq\frac{1}{\sqrt{b_i}}z,\ \mbox{for }0\leq z\leq 2\pi.
\end{eqnarray}
To estimate the other two terms we use the following explicit formula proved in \cite[Theorem 5.10]{U2}. For any Milnor basis $\{E_1,E_2,E_3\}$\ on $SU(2)$, for any $s,t\in\mathbb{R}$,
\begin{eqnarray*}
e^{f(s,t)E_3}=e^{-\tau(s,t) E_2}e^{\frac{s}{2}E_1}e^{tE_2}e^{-sE_1}e^{-tE_2}e^{\frac{s}{2}E_1}e^{\tau(s,t) E_2},
\end{eqnarray*}
where $\tau,f$\ are continuous functions in explicit formulas satisfying $|\tau(s,t)|\leq |t|/2$, and for some small constant $c>0$\ (e.g. $c=1/4$), $|f(s,t)|\geq c|st|$\ for $|s|,|t|\leq \pi$.

Observe that for each $i\in\mathbb{N}_+$, $E_{i+1}^2$\ commutes with $X_i^1=E_i^1$\ and $X_i^2=E_i^2+E_{i+1}^2$. For any $0<z<c\pi^2$, let $0<|\xi|<\sqrt{|z|/c}<\pi$\ be such that $f(\xi,\xi)=z$\ (one can check from the formula $f(\xi,\xi)=2\arccos{(\cos^2{(\xi/2)}+\sin^2{(\xi/2)}\cos{\xi})}$\ that it is a strictly increasing bijection from $[0,\pi]$\ to $[0,2\pi]$). We have
\begin{eqnarray*}
e^{zE_i^3}=e^{f(\xi,\xi)E_i^3}=e^{-\tau(\xi,\xi) X_i^2}e^{\frac{\xi}{2}X_i^1}e^{\xi X_i^2}e^{-\xi X_i^1}e^{-\xi X_i^2}e^{\frac{\xi}{2}X_i^1}e^{\tau(\xi,\xi) X_i^2}.
\end{eqnarray*}
For $-c\pi^2<z<0$, we take the inverse of all the elements above to get
\begin{eqnarray*}
e^{zE_i^3}=e^{-|z|E_i^3}=e^{-f(\xi,\xi)E_i^3}=e^{-\tau(\xi,\xi) X_i^2}e^{-\frac{\xi}{2}X_i^1}e^{\xi X_i^2}e^{\xi X_i^1}e^{-\xi X_i^2}e^{-\frac{\xi}{2}X_i^1}e^{\tau(\xi,\xi) X_i^2}.
\end{eqnarray*}
It follows that for any $0<|z|<c\pi^2$,
\begin{eqnarray}
\lefteqn{d_L(e^{zE_i^3}) \leq \frac{1}{\sqrt{c_i}}|\tau(\xi,\xi)|+\frac{1}{\sqrt{b_i}}\left|\frac{\xi}{2}\right|}\notag\\
&+& \frac{1}{\sqrt{c_i}}|\xi|+\frac{1}{\sqrt{b_i}}|-\xi|+\frac{1}{\sqrt{c_i}}|-\xi|+\frac{1}{\sqrt{b_i}}\left|\frac{\xi}{2}\right|+\frac{1}{\sqrt{c_i}}|\tau(\xi,\xi)|\notag\\
&\leq& C\left(\frac{1}{\sqrt{b_i}}+\frac{1}{\sqrt{c_i}}\right)|\xi|\leq C'\left(\frac{1}{\sqrt{b_i}}+\frac{1}{\sqrt{c_i}}\right)\sqrt{|z|}. \label{distanceE2}
\end{eqnarray}
Here $C,C'$\ are independent of $i$. Lastly, as $X_i^1=E_i^1$\ and $E_i^2=[E_i^3,X_i^1]$,
\begin{eqnarray*}
e^{f(p,q)E_i^2}=e^{-\tau(p,q) X_i^1}e^{\frac{p}{2}E_i^3}e^{qX_i^1}e^{-pE_i^3}e^{-qX_i^1}e^{\frac{p}{2}E_i^3}e^{\tau(p,q) X_i^1}.
\end{eqnarray*}
For any $0< z< c\pi^2$, let $p=\xi^{4/3}$, $q=\xi^{2/3}$\ for some $0<|\xi|<\sqrt{|z|/c}$ such that $f(p,q)=z$. Here $f(\xi^{4/3},\xi^{2/3})=2\arccos{\left(\cos^2{(\xi^{4/3}/2)}+\sin^2{(\xi^{4/3}/2)}\cos{(\xi^{2/3})}\right)}$\ is a strictly increasing bijection from $[0,2.57]$\ to $[0,3.66]$\ roughly. For $-c\pi^2<z<0$\ we take the inverse of the expressions as above. Using $|\tau(p,q)|\leq |q|/2$\ and applying (\ref{distanceE2}) to estimate $d_L$\ of the terms involving $E_i^3$, we get that for any $0<|z|<c\pi^2$,
\begin{eqnarray}
\lefteqn{d_L(e^{zE_i^2})\leq C\left(\frac{q}{\sqrt{b_i}}+\left(\frac{1}{\sqrt{b_i}}+\frac{1}{\sqrt{c_i}}\right)\sqrt{p}\right)}\notag\\
&\leq& C'\left(\frac{1}{\sqrt{b_i}}+\frac{1}{\sqrt{c_i}}\right)\xi^{2/3}\leq C''\left(\frac{1}{\sqrt{b_i}}+\frac{1}{\sqrt{c_i}}\right)|z|^{1/3}. \label{distanceE3}
\end{eqnarray}
All constants $C,C',C''$\ are independent of $i$.

For $d_L(g)$\ of a general element $g\in G$, recall that $g=\prod_{i=1}^{\infty} g_i$\ with $g_i$\ in the $i$th $SU(2)$. Recall that any element $h\in SU(2)$\ can be written in the form $h=\prod_{j=1}^{N}h_j$\ where $h_j\in V_\eta$, $d_{\Delta_{SU(2)}}(h_j)<\eta/c$\ (as in (\ref{distformula2ndkind})), $N\leq N_0$. See the paragraph above Remark \ref{expremark}. Applying this to our example, for each $i\in\mathbb{N}_+$,
\begin{eqnarray*}
g_i=\prod_{k=1}^{N_i} h_{i,k}=\prod_{k=1}^{N_i} \exp{\{y_{i,k}^1E_i^1\}}\exp{\{y_{i,k}^2E_i^2\}}\exp{\{y_{i,k}^3E_i^3\}}.
\end{eqnarray*}
All subscripts $i$\ in this formula refer to the $i$th factor $SU(2)$. The second equality corresponds to the use of coordinates of the second kind. We may take all $N_i=N_0$\ by inserting identity elements in the product if necessary. By (\ref{distformula2ndkind}),
\begin{eqnarray}
\max_{\substack{1\leq k\leq N_i\\j=1,2,3}}{|y_{i,k}^j|}\leq Cd_{\Delta_{SU(2)}}(h_{i,k})=C\sqrt{a_i}d_{\Delta_{a_i}}(h_{i,k}),\label{estimate1}
\end{eqnarray}
further note that $d_{\Delta_{a_i}}(h_{i,k})\leq d_{\Delta_{a_i}}(g_i)$.

Note that different $SU(2)$\ factors commute, so 
\begin{eqnarray*}
\lefteqn{g=\prod_{i=1}^{\infty} g_i=\prod_{i=1}^{\infty}\prod_{k=1}^{N_0} h_{i,k}=\prod_{k=1}^{N_0}\left(\prod_{i=1}^{\infty} h_{i,k}\right)}\\
&=&\prod_{k=1}^{N_0}\left(\prod_{i=1}^{\infty} \exp{\{y_{i,k}^1E_i^1\}}\exp{\{y_{i,k}^2E_i^2\}}\exp{\{y_{i,k}^3E_i^3\}}\right)\\
&=&\prod_{k=1}^{N_0}\left\{\left(\prod_{i=1}^{\infty} \exp{\{y_{i,k}^1E_i^1\}}\right)\left(\prod_{i=1}^{\infty} \exp{\{y_{i,k}^2E_i^2\}}\exp{\{y_{i,k}^3E_i^3\}}\right)\right\}
\end{eqnarray*}
To see the last line, note that in the parenthesis of the middle line (i.e., the product over $i$\ with fixed $k$), all terms in front of each $\exp{\{y_{i,k}^1E_i^1\}}$\ are elements in other $SU(2)$ factors and commute with $\exp{\{y_{i,k}^1E_i^1\}}$. Moving $\exp{\{y_{i,k}^1E_i^1\}}$\ to the front gives $\left(\prod_{i=1}^{\infty} \exp{\{y_{i,k}^1E_i^1\}}\right)$\ in the last line. It follows that
\begin{eqnarray*}
\lefteqn{d_L(g)\leq \sum_{k=1}^{N_0}\left\{d_L\left(\exp{\left\{\sum_{i=1}^{\infty}y_{i,k}^1E_i^1\right\}}\right)+d_L\left(\prod_{i=1}^{\infty} \exp{\{y_{i,k}^2E_i^2\}}\exp{\{y_{i,k}^3E_i^3\}}\right)\right\}}\\
&\leq& \sum_{k=1}^{N_0}\left\{\left(\sum_{i=1}^{\infty}\frac{|y_{i,k}^1|^2}{b_i}\right)^{1/2}+\sum_{i=1}^{\infty}d_L(\exp{\{y_{i,k}^2E_i^2\}})+d_L(\exp{\{y_{i,k}^3E_i^3\}})\right\}.
\end{eqnarray*}
Using (\ref{distanceE2}) and (\ref{distanceE3}), together with (\ref{estimate1}), gives
\begin{eqnarray*}
\lefteqn{d_L(g)\leq N_0\max_{1\leq k\leq N_0} \left\{\left(\sum_{i=1}^{\infty}\frac{|y_{i,k}^1|^2}{b_i}\right)^{1/2}+\right.}\\
&&\left.\sum_{i=1}^{\infty} \left(\frac{1}{\sqrt{b_i}}+\frac{1}{\sqrt{c_i}}\right)\left((|y_{i,k}^2|)^{1/3}+(|y_{i,k}^3|)^{1/2}\right)\right\}\\
&\leq& CN_0\max_{1\leq k\leq N_0} \left\{\left(\sum_{i=1}^{\infty}\frac{a_i(d_{\Delta_{a_i}}(g_i))^2}{b_i}\right)^{1/2}+\right.\\
&&\left.\sum_{i=1}^{\infty} \left(\frac{1}{\sqrt{b_i}}+\frac{1}{\sqrt{c_i}}\right)\left((\sqrt{a_i}d_{\Delta_{a_i}}(g_i))^{1/3}+(\sqrt{a_i}d_{\Delta_{a_i}}(g_i))^{1/2}\right)\right\}.
\end{eqnarray*}
The first summation has upper bound
\begin{eqnarray*}
\sum_{i=1}^{\infty}\frac{a_i(d_{\Delta_{a_i}}(g_i))^2}{b_i}\leq \sup_{i\in\mathbb{N}_+}\left\{\frac{a_i}{b_i}\right\}\sum_{i=1}^{\infty}(d_{\Delta_{a_i}}(g_i))^2.
\end{eqnarray*}
Applying the Cauchy-Schwartz inequality for the second summation, we have
\begin{eqnarray*}
\lefteqn{\hspace{-0.5in}\sum_{i=1}^{\infty} \left(\frac{1}{\sqrt{b_i}}+\frac{1}{\sqrt{c_i}}\right)(\sqrt{a_i}d_{\Delta_{a_i}}(g_i))^{1/3}+\sum_{i=1}^{\infty} \left(\frac{1}{\sqrt{b_i}}+\frac{1}{\sqrt{c_i}}\right)(\sqrt{a_i}d_{\Delta_{a_i}}(g_i))^{1/2}}\\
&\hspace{-0.2in}\leq& \left(\sum_{i=1}^{\infty}a_i^{1/5}\left(\frac{1}{\sqrt{b_i}}+\frac{1}{\sqrt{c_i}}\right)^{6/5}\right)^{5/6}\left(\sum_{i=1}^{\infty} d_{\Delta_{a_i}}(g_i)^2\right)^{1/6}\\
&&+\left(\sum_{i=1}^{\infty}a_i^{1/3}\left(\frac{1}{\sqrt{b_i}}+\frac{1}{\sqrt{c_i}}\right)^{4/3}\right)^{3/4}\left(\sum_{i=1}^{\infty}d_{\Delta_{a_i}}(g_i)^2\right)^{1/4}.
\end{eqnarray*}

Note that $\sum_i d_{\Delta_{a_i}}(g_i)^2=(d_\Delta(g))^2$\ as reviewed in (\ref{Pythagorean}). If the ratios $a_i/b_i$\ are bounded uniformly, and the coefficient summations converge, we conclude that there is some constant $C>0$\ such that
\begin{eqnarray*}
d_L(g)\leq Cd_{\Delta}(g)^{1/3},\ \forall g\in G=\prod_{i=1}^{\infty}SU(2).
\end{eqnarray*}
Breaking the mixed terms like
\begin{eqnarray*}
\lefteqn{\left(\frac{1}{\sqrt{b_i}}+\frac{1}{\sqrt{c_i}}\right)^{4/3}\leq 2^{4/3}\left(\max{\left\{\frac{1}{\sqrt{b_i}},\frac{1}{\sqrt{c_i}}\right\}}\right)^{4/3}}\\
&=& 2^{4/3}\max{\left\{(b_i)^{-2/3},(c_i)^{-2/3}\right\}}\leq 2^{4/3}\left(\left(\frac{1}{b_i}\right)^{2/3}+\left(\frac{1}{c_i}\right)^{2/3}\right)
\end{eqnarray*}
so that the coefficient summations are in $a_i,b_i$\ and in $a_i,c_i$\ separately, we obtain the condition (\ref{coeffcondition1}). 

Next we discuss more concrete conditions on $b_i,c_i$\ that guarantee (\ref{coeffcondition1}), under different summability properties of $(a_i)_{i=1}^{\infty}$. Write $b_i:=a_i\kappa(i)$\ for $\kappa(i)>0$, $i\in\mathbb{N}_+$. Then the two summations in the first line of (\ref{coeffcondition1}) are
\begin{eqnarray}
\sum_{i=1}^{\infty} \frac{a_i^{1/3}}{b_i^{2/3}}&=&\sum_{i=1}^{\infty} \frac{1}{a_i^{1/3}\kappa(i)^{2/3}},\label{coeffintermediate1}\\ \sum_{i=1}^{\infty} \frac{a_i^{1/5}}{b_i^{3/5}}&=&\sum_{i=1}^{\infty} \frac{1}{a_i^{2/5}\kappa(i)^{3/5}}.\label{coeffintermediate2}
\end{eqnarray}
We consider three cases of $(a_i)_{i=1}^{\infty}$.\\
Case 1. Suppose $\sum_i a_i^{-1/3}<\infty$, which implies $\sum_i a_i^{-2/5}\leq \left(\sum_i a_i^{-1/3}\right)^{6/5}<\infty$. Taking $\kappa(i)$\ away from $0$, i.e.,
\begin{eqnarray*}
\sup_{i\in\mathbb{N}_+} \kappa(i)^{-1}<\infty,
\end{eqnarray*}
guarantees that the summations in (\ref{coeffintermediate1}) and (\ref{coeffintermediate2}) converge.\\
Case 2. Suppose $\sum_i a_i^{-r}<\infty$\ for some $1/3<r\leq 2/5$. Applying H\"{o}lder inequality to (\ref{coeffintermediate1}), we have for $p>1$\ with $p/3=r$,
\begin{eqnarray*}
\sum_{i=1}^{\infty} \frac{a_i^{1/3}}{b_i^{2/3}}=\sum_{i=1}^{\infty} \frac{1}{a_i^{1/3}\kappa(i)^{2/3}}\leq \left(\sum_{i=1}^{\infty} \frac{1}{a_i^{p/3}}\right)^{1/p}\left(\sum_{i=1}^{\infty}\frac{1}{\kappa(i)^{2p/3(p-1)}}\right)^{1-1/p}.
\end{eqnarray*}
By assumption, the first factor of the product is finite. As $p=3r$, $2p/3(p-1)=2r/(3r-1)$, so the condition
\begin{eqnarray}
\label{kappa1}
\sum_{i=1}^{\infty}\frac{1}{\kappa(i)^{2r/(3r-1)}}<\infty
\end{eqnarray}
implies the convergence of (\ref{coeffintermediate1}). As $r\leq 2/5$, $\sum_i a_i^{-2/5}<\infty$, so (\ref{kappa1}) (in particular, $\kappa(i)\rightarrow\infty$) guarantees the convergence of (\ref{coeffintermediate2}).\\
Case 3. Finally, suppose $\sum_i a_i^{-r}<\infty$\ for some $2/5<r\leq 1$. Recall that the condition $\sum_i a_i^{-1}<\infty$\ is equivalent to $d_\Delta$\ being finite, so here we require $r\leq 1$. Condition (\ref{kappa1}) implies the convergence of (\ref{coeffintermediate1}) as before. The same type of argument applied to (\ref{coeffintermediate2}) gives
\begin{eqnarray*}
\sum_{i=1}^{\infty} \frac{a_i^{1/5}}{b_i^{3/5}}=\sum_{i=1}^{\infty} \frac{1}{a_i^{2/5}\kappa(i)^{3/5}}\leq \left(\sum_{i=1}^{\infty} \frac{1}{a_i^{2q/5}}\right)^{1/q}\left(\sum_{i=1}^{\infty}\frac{1}{\kappa(i)^{3q/5(q-1)}}\right)^{1-1/q},
\end{eqnarray*}
where $2q/5=r$, $q=5r/2>1$, and the condition
\begin{eqnarray}
\label{kappa2}
\sum_{i=1}^{\infty}\frac{1}{\kappa(i)^{3r/(5r-2)}}<\infty
\end{eqnarray}
implies the convergence of (\ref{coeffintermediate2}). Comparing the powers of (\ref{kappa1}) and (\ref{kappa2}), we see that
\begin{eqnarray*}
\frac{2r}{3r-1}-\frac{3r}{5r-2}=\frac{r(r-1)}{(3r-1)(5r-2)}\leq 0,
\end{eqnarray*}
so (\ref{kappa1}) is a stronger requirement than (\ref{kappa2}).

The same arguments work for the coefficient summations in $a_i,c_i$\ in (\ref{coeffcondition1}). In summary,
\begin{eqnarray*}
&&\mbox{(i) if }\sum_{i=1}^\infty \frac{1}{a_i^{1/3}}<\infty,\ \mbox{then }\sup_{i\in\mathbb{N}_+}\left\{\frac{a_i}{b_i}\right\}<\infty,\ \sup_{i\in\mathbb{N}_+}\left\{\frac{a_i}{c_i}\right\}<\infty\ \mbox{imply (\ref{coeffcondition1})};\\
&& \mbox{(ii) if }\sum_{i=1}^\infty \frac{1}{a_i^{r}}<\infty\ \mbox{for some }\frac{1}{3}<r\leq 1,\ \mbox{then }\sum_{i=1}^\infty \left(\frac{a_i}{b_i}\right)^{2r/(3r-1)}<\infty,\\
&&\sum_{i=1}^\infty \left(\frac{a_i}{c_i}\right)^{2r/(3r-1)}<\infty\ \mbox{imply (\ref{coeffcondition1})}.
\end{eqnarray*}

If in addition the coefficients $(a_i)_{i=1}^{\infty}$\ are chosen so that $\mu_t^\Delta$\ satisfies (CK$\lambda$) for some $0<\lambda<1$, for example, $a_i=i^{1/\lambda}$, then Theorem \ref{formthm} and Theorem \ref{gaussianestimatethm} apply and provide comparisons between Dirichlet forms and upper bound estimates for $\mu_t^L$. Letting $c=1/3$\ in Theorem \ref{gaussianestimatethm} gives the upper bounds of $\lambda$\ in Item (2) of Proposition \ref{example}. In particular, the upper bounds $0.1$, $0.08$\ are less than $1/3$. Note that the condition (CK$\lambda$) implies that the counting function $N(s)$\ for $(a_i)_{i=1}^{\infty}$, defined in (\ref{countingfunction}), satisfies $N(s)\lesssim s^{\lambda}$. Then (say $a_1\geq 1$)
\begin{eqnarray*}
\sum_{i=1}^\infty \frac{1}{a_i^{1/3}}=\int_1^\infty \frac{1}{s^{1/3}}\,dN(s)\lesssim \frac{s^\lambda}{s^{1/3}}\bigg|_1^\infty+\frac{1}{3}\int_1^\infty s^\lambda s^{-4/3}\,ds=-1+\frac{1}{3}\frac{1}{\frac{1}{3}-\lambda}<\infty.
\end{eqnarray*}
So in this case, (\ref{coeffcondition2}) guarantees (\ref{coeffcondition1}) by Item (i) in the previous paragraph.
\end{proof}

\bibliographystyle{plain}

\end{document}